\documentclass[12pt]{article}
\usepackage[english]{babel}

\usepackage{amsmath}
\usepackage{amssymb,amsthm}

\usepackage{bbm}

\numberwithin{equation}{section}

\newtheorem{theorem}{Theorem}

\newtheorem{lemma}{Lemma}[section]

\newtheorem{remark}[lemma]{Remark}
\newtheorem{proposition}{Proposition}
\newtheorem{notation}[lemma]{Notation}

\renewcommand{\leq}{\leqslant}
\renewcommand{\geq}{\geqslant}

\newcommand{\Rot}{\mathcal{R}}

\makeatletter
\newcommand*{\IfItalic}{%
  \ifx\f@shape\my@test@it
    \expandafter\@firstoftwo
  \else
    \expandafter\@secondoftwo
  \fi
}
\newcommand*{\my@test@it}{it}
\makeatother

\newcommand{\myae}{\IfItalic{\emph{\mbox{\ae}}}{\mbox{\ae}}}

\textwidth=17cm \oddsidemargin=-5mm \topmargin=-30mm
\textheight=26cm




\usepackage[all]{xy}

\usepackage{amscd}

\begin{document}

\begin{center}
{\Large Self-semiconjugation\\
of piecewise linear unimodal maps }

{\large Makar Plakhotnyk\\
University of S\~ao Paulo, Brazil.\\

makar.plakhotnyk@gmail.com}\\
\end{center}

\begin{abstract}
We devote this work to the functional equation $\psi \circ g =
g\circ \psi$, where $\psi$ is an unknown function and $g$ is
piecewise linear unimodal map, which is topologically conjugated
to the tent map. We will call such $\psi$ self-semiconjugations of
$g$. Our the main results are the following:

1. Suppose that there is a self-semiconjugation of $g$, whose
tangent at $0$ is not a power of $2$, and suppose that all the
kinks of $g$ are in the complete pre-image of $0$. Then all the
self-semiconjugations of $g$ are piecewise linear.

2. Suppose that all self-semiconjugations of $g$ are piecewise
linear. Then the conjugacy of $g$ and the tent map is piecewise
linear~\footnote{This work is partially supported by FAPESP (S\~ao
Paulo, Brazil).}~\footnote{AMS subject classification:
37E05  
}.
\end{abstract}

\section{Introduction}

We continue in this work the study of the topological conjugation
of piecewise linear unimodal maps, which was started in our
previous works~\cite{UMZh-2016}, \cite{Plakh-Arx-Odesa}
and~\cite{Plakh-Arx-Derivat-6}.

We call a map $g:\, [0, 1]\rightarrow [0, 1]$ a
\underline{\emph{unimodal}} map, if it is continuous and is of the
form $$ g(x) = \left\{
\begin{array}{ll}g_l(x),& 0\leq x\leq
v,\\
g_r(x), & v\leq x\leq 1,
\end{array}\right.
$$ where %
$v\in (0,\, 1)$, the function $g_l$ increase, the function $g_r$
decrease, and $$g(0)=g(1)=1-g(v)=0.$$

The most studied and popular example of unimodal map is the
\underline{\emph{tent map}}, i.e. $f:\, [0, 1]\rightarrow [0, 1]$
of the form
\begin{equation}\label{eq:1.1} x \mapsto 2x-|1-2x|
\end{equation}

\begin{theorem}\label{th:1}\cite[p. 53]{Ulam-1964-a} %
The tent map~\eqref{eq:1.1} is topologically conjugated to the
unimodal map $g$ if and only if the complete pre-image of $0$
under the action of $g$ is dense in $[0,\, 1]$.
\end{theorem}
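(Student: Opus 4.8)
The plan is to prove both implications, treating the ``only if'' part as routine and concentrating on the construction of the conjugacy in the ``if'' part. Suppose first that $h$ is a homeomorphism of $[0,1]$ with $h\circ f = g\circ h$. Because $f$ and $g$ are unimodal with the prescribed behaviour at $0$ and $1$, such an $h$ cannot be decreasing (it would force $g(1)=h(f(0))=h(0)$, impossible), so $h$ is an increasing homeomorphism of $[0,1]$ and hence $h(0)=0$, $h(1)=1$. From $g\circ h=h\circ f$ and bijectivity of $h$ one gets $g^{-1}\circ h = h\circ f^{-1}$, and an easy induction then yields $g^{-n}(0)=h\bigl(f^{-n}(0)\bigr)$ for every $n$. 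Thus the complete pre-image $\bigcup_{n\geq 0}g^{-n}(0)$ is the $h$-image of $\bigcup_{n\geq 0}f^{-n}(0)$, the latter being the set of dyadic points of $[0,1]$ and therefore dense; since a homeomorphism carries dense sets to dense sets, $\bigcup_{n\geq 0}g^{-n}(0)$ is dense.

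For the converse, write $D_n=f^{-n}(0)$ and $E_n=g^{-n}(0)$, so $D_n\subseteq D_{n+1}$ and $E_n\subseteq E_{n+1}$ since $0$ is fixed by both maps. I would first record the combinatorial facts that for a unimodal map of the given type $g^{-1}(0)=\{0,1\}$, $g^{-1}(1)$ is the single turning point $v$, and every other point of $(0,1)$ has exactly one preimage in $[0,v)$ and one in $(v,1]$; consequently $|E_n|=|D_n|=2^{n-1}+1$ and the nested sequence $(E_n)_n$ together with $g$ has exactly the same ``shape'' as $(D_n)_n$ with $f$. Then I would build an order preserving bijection $h_n\colon D_n\to E_n$ by induction: put $h_1(0)=0$, $h_1(1)=1$, and given $h_n$, for $x\in D_{n+1}$ pick a monotonicity branch $I$ of $f$ containing $x$; since the corresponding branch $J$ of $g$ maps monotonically onto $[0,1]$ there is a unique $y\in J$ with $g(y)=h_n\bigl(f(x)\bigr)$, and we set $h_{n+1}(x)=y$ (for the turning point $x=v$ both branches give the same $y=v$). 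Using that $f$ and $g$ are simultaneously increasing on their left branches and decreasing on their right branches, one checks that $h_{n+1}$ is order preserving, extends $h_n$, is onto $E_{n+1}$, and satisfies $g\circ h_{n+1}=h_n\circ f$ on $D_{n+1}$.

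The maps $h_n$ glue to an order preserving bijection $h\colon\bigcup_n D_n\to\bigcup_n E_n$ with $g\circ h=h\circ f$ on $\bigcup_n D_n$. The domain $\bigcup_n D_n$ is dense in $[0,1]$ and, by hypothesis, so is the image $\bigcup_n E_n$; both contain $0$ and $1$. Hence $h$ extends uniquely to an increasing homeomorphism $h\colon[0,1]\to[0,1]$ — this is the standard fact that an order isomorphism between two dense subsets of $[0,1]$, each containing the endpoints, extends to a homeomorphism, and it is precisely here that density of the complete pre-image of $0$ under $g$ is used. Finally $h\circ f$ and $g\circ h$ are continuous and agree on the dense set $\bigcup_n D_n$, so they agree on $[0,1]$, and $h$ conjugates $f$ to $g$.

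The main obstacle is the inductive step: proving, simultaneously, that each new point of $D_{n+1}$ has a well-defined image forced by the conjugacy equation (this rests on each branch of $g$ being a monotone bijection onto $[0,1]$, together with $h_n(v)=v$ so that left branches go to left branches), that $h_{n+1}$ remains order preserving and surjective onto $E_{n+1}$, and that these maps are mutually compatible. Once this ``tower isomorphism'' between $(D_n,f)$ and $(E_n,g)$ is established, the passage to the limit and the density-driven extension are routine.
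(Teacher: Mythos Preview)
The paper does not actually prove Theorem~\ref{th:1}; it is quoted as a known result from~\cite{Ulam-1964-a} and used throughout without argument. So there is no ``paper's own proof'' to compare against. Your write-up supplies the standard construction and is essentially correct: the ``only if'' direction via $g^{-n}(0)=h(f^{-n}(0))$ and density preservation is fine, and the ``if'' direction via an inductively built order isomorphism $h_n\colon f^{-n}(0)\to g^{-n}(0)$ extended by density is exactly the classical route (and is, in fact, how Lemma~\ref{lema:2.4} of the present paper is obtained as a by-product).

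Two small points worth tightening. First, a notational slip: you write ``for the turning point $x=v$'', but $v$ is the turning point of $g$; on the $f$-side the turning point is $1/2$, and the statement you want is $h_n(1/2)=v$ for $n\geq 2$. Second, in verifying that $h_{n+1}$ extends $h_n$ you need not only $g(h_n(x))=h_{n-1}(f(x))$ but also that $h_n(x)$ lies in the \emph{same branch} of $g$ that the construction of $h_{n+1}(x)$ selects; this follows from the order-preserving property and $h_n(1/2)=v$, but it is precisely the coherence check you flag as ``the main obstacle'', so it deserves a line rather than being folded into ``one checks''. With those clarifications the argument is complete.
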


Remind, that the set $g^{-\infty}(a) = \bigcup\limits_{n\geq
1}g^{-n}(a)$, where $g^{-n}(a) = \left\{ x\in [0,\, 1]:\, g^n(x) =
a\right\}$ for all $n\geq 1$, is called the complete pre-image of
$a$ (under the action of the map $g$).

We will call the map, defined on the set $A\subseteq \mathbb{R}$,
with values in $\mathbb{R}$, \underline{\emph{linear}}, if its
graph is a line (or a line segment). We will call a map
\underline{\emph{piecewise linear}}, if its domain can be divided
to finitely many intervals, such that the map is linear on each of
them. A point, where the piecewise linear map is not
differentiable, will be called a \underline{\emph{kink}} of this
map.

We will call piecewise linear unimodal map a
\underline{\emph{carcass map}}. We will say that a carcass map is
\underline{\emph{firm}}, if all its kinks are in the complete
pre-image of $0$.

We have studied the properties of the firm carcass maps
in~\cite{Plakh-Arx-Derivat-6}. Precisely, by~\cite[Theorem
2]{Plakh-Arx-Derivat-6}, the complete pre-image of $0$ under the
action of firm carcass map is dense in $[0,\, 1]$, whence, by
Theorem~\ref{th:1}, any firm carcass map is topologically
conjugated to the tent map.

We have described in~\cite{Plakh-Arx-Odesa} the continuous
solutions $\xi:\, [0, 1]\rightarrow [0, 1]$ of the functional
equation
\begin{equation}\label{eq:1.2}
\xi\circ f = f\circ \xi,
\end{equation} where $f$ is the tent map.

\begin{proposition}\label{prop:1}\cite[Theorem 1]{Plakh-Arx-Odesa}
1. Let $\xi$ be an arbitrary continuous solution of the functional
equation~\eqref{eq:1.2}. Then $\xi$ is one of the following forms:

a. There exists $t\in \mathbb{N}$ such that
\begin{equation}\label{eq:1.3}
\xi = \xi_t:\, x \mapsto \displaystyle{\frac{1 - (-1)^{[tx]}}{2}
+(-1)^{[tx]}\{tx\}},\end{equation} where $\{\cdot \}$ denotes the
function of the fractional part of a number and $[\cdot ]$ is the
integer part.

b. $\xi(x)=x_0$ for all $x$, where either $x_0=0$, or $x_0= 2/3$.

\noindent 2. For every $t\in \mathbb{N}$ the
function~\eqref{eq:1.3} satisfies~\eqref{eq:1.2}.
\end{proposition}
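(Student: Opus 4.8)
The plan is to use that the tent map $f$ is a factor of the angle–doubling map through the classical folding semiconjugacy, lift a continuous solution $\xi$ along it, and observe that the lifted equation is rigid; part~2 of the proposition is the same computation read backwards. Write $f$ for the tent map, $f(x)=2\min\{x,1-x\}$ on $[0,1]$, and let $\Lambda\colon\mathbb{R}\to[0,1]$ be its $1$-periodic even extension $\Lambda(\theta)=2\,\mathrm{dist}(\theta,\mathbb{Z})$. A direct check gives $f\circ\Lambda=\Lambda\circ D$ with $D(\theta)=2\theta$, while $\Lambda(\theta)=\Lambda(\theta')$ precisely when $\theta\equiv\pm\theta'\pmod 1$; thus $\Lambda$ is $2$-to-$1$ over $(0,1)$ and $1$-to-$1$ over $\{0,1\}$. (One also notes at the outset that $\xi([0,1])$ is a closed interval with $f(\xi([0,1]))=\xi([0,1])$, and that the only such intervals are $\{0\},\{2/3\},[0,1]$, so $\xi$ is either one of the constants in item~1b or onto; but the lifting argument below does not really need this.)

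I would then lift. Because $\xi\circ\Lambda\colon\mathbb{R}\to[0,1]$ is continuous and $\mathbb{R}$ is simply connected, one can choose a continuous $\Xi\colon\mathbb{R}\to\mathbb{R}$ with $\Lambda\circ\Xi=\xi\circ\Lambda$ (over the open set where $\xi\circ\Lambda\in(0,1)$ one selects local sheets of $\Lambda$ and extends over $(\xi\circ\Lambda)^{-1}\{0,1\}$; the sheet choices are compatible since one is working over a line). Combining $f\circ\Lambda=\Lambda\circ D$ with $\xi\circ f=f\circ\xi$ gives $\Lambda(\Xi(2\theta))=\Lambda(2\,\Xi(\theta))$ for all $\theta$, hence $\Xi(2\theta)\equiv\pm2\,\Xi(\theta)\pmod 1$ pointwise; likewise the periodicity of $\Lambda$ gives $\Xi(\theta+1)\equiv\pm\Xi(\theta)\pmod 1$. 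The crucial step is to resolve these signs: using that the two competing loci are closed and cover the connected line, and the freedom to alter $\Xi$ by a deck transformation of $\Lambda$ on each interval $[k/2,(k+1)/2]$, one arranges $\Xi(2\theta)=2\,\Xi(\theta)+m$ and $\Xi(\theta+1)=\Xi(\theta)+d$ globally for fixed $m,d\in\mathbb{Z}$ — the only exceptional configurations forcing $\Xi$ to be constant, which descend exactly to $\xi\equiv0$ and $\xi\equiv2/3$.

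With the clean relations in hand, set $\psi=\Xi+m$; then $\psi(2\theta)=2\psi(\theta)$ and $\psi(\theta+1)=\psi(\theta)+d$, so writing $\psi(\theta)=d\theta+\phi(\theta)$ with $\phi$ continuous and $1$-periodic the first relation forces $\phi(2\theta)=2\phi(\theta)$; since $\phi$ is bounded, $\phi(\theta)=2^{-n}\phi(2^{n}\theta)\to0$, so $\phi\equiv0$ and $\Xi(\theta)=d\theta$ up to the deck ambiguity. Descending with $\theta=x/2$ then gives $\xi(x)=\Lambda(dx/2)=2\,\mathrm{dist}(|d|\,x/2,\mathbb{Z})$, which is $\xi_{|d|}$ of~\eqref{eq:1.3} when $|d|\geq1$ and the constant $0$ when $d=0$. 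For part~2 one runs this backwards: multiplication by $t$ commutes with $D$, so $\xi_t$ — which satisfies $\xi_t\circ\Lambda=\Lambda\circ(\theta\mapsto t\theta)$, equivalently $\Lambda(t\Lambda(x)/2)=\Lambda(tx)$ as verified separately on $[0,\tfrac12]$ and $[\tfrac12,1]$ — commutes with $f$.

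The sign/deck bookkeeping in the second paragraph is, I expect, the only real obstacle: everything before it is formal, and everything after it is the soft contraction $\phi(2\theta)=2\phi(\theta)\Rightarrow\phi\equiv0$. One can instead stay within $[0,1]$ — after the reduction, show $\xi$ maps $[0,\tfrac12]$ and $[\tfrac12,1]$ into themselves, deduce $\xi(2x)=2\xi(x)$ and $\xi(2-2x)=2-2\xi(x)$ on them, evaluate $\xi$ on all dyadic rationals by induction on the denominator, and pass to the limit — but this route still needs the nontrivial input $\xi(0)=0$, which the covering picture supplies for free.
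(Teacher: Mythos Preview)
This proposition is not proved in the present paper: it is quoted verbatim as \cite[Theorem~1]{Plakh-Arx-Odesa} and used as a black box, so there is no in-paper argument to compare your proposal against.

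On the merits of your outline itself: the strategy of lifting through the folding map $\Lambda$ and reducing to the contraction $\phi(2\theta)=2\phi(\theta)\Rightarrow\phi\equiv 0$ is sound and, once completed, would give the result cleanly. The existence of a continuous lift $\Xi$ can be justified despite $\Lambda$ not being a covering map (for instance, first lift $\xi\circ\Lambda$ to $S^{1}$ via $\theta\mapsto e^{i\pi(\xi\circ\Lambda)(\theta)}$, which is automatically continuous, then to $\mathbb{R}$ by the universal cover), so that step is recoverable. The genuine gap is the one you yourself flag: promoting the pointwise alternatives $\Xi(2\theta)\equiv\pm 2\,\Xi(\theta)\pmod 1$ and $\Xi(\theta+1)\equiv\pm\Xi(\theta)\pmod 1$ to single global relations with fixed sign and fixed integer shift. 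Your sentence about ``the two competing loci are closed and cover the connected line, and the freedom to alter $\Xi$ by a deck transformation on each interval'' is not yet an argument: the two loci overlap precisely where $\Xi(\theta)\in\tfrac14\mathbb{Z}$, the integer $m$ may a~priori jump across such points, and re-choosing $\Xi$ by a deck transformation on one subinterval can destroy continuity or disrupt the companion relation for $\theta\mapsto\theta+1$. You also assert, without proof, that the only obstruction forces $\Xi$ constant; why nothing more exotic can occur needs to be said. Until this bookkeeping is actually carried out, the proposal is a plausible sketch rather than a proof.
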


The simplicity of the description of the solution
of~\eqref{eq:1.2} motivates the study of the functional equation
\begin{equation}\label{eq:1.4} \psi \circ g = g\circ \psi,
\end{equation} %
for a unknown (continuous; surjective) $\psi:\, [0, 1]\rightarrow
[0, 1]$ and given carcass map $g$.

The main results of this work is the next two theorems.

\begin{theorem}\label{th:2}
Let $g$ be a firm carcass map, which is topologically conjugated
to the tent map. Suppose that there is a piecewise linear solution
of~\eqref{eq:1.4} with tangent $t$ at $0$, such that $t$ is not a
natural power of $2$. Then all the solutions of~\eqref{eq:1.4} are
piecewise linear.
\end{theorem}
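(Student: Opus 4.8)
The plan is to transport everything to the tent map $f$ via the topological conjugacy. Let $h$ be the homeomorphism with $g = h^{-1}\circ f\circ h$. If $\psi$ solves~\eqref{eq:1.4}, then $\xi := h\circ \psi\circ h^{-1}$ solves~\eqref{eq:1.2}, so by Proposition~\ref{prop:1} every such $\xi$ is either one of the $\xi_t$ of~\eqref{eq:1.3} or a constant ($0$ or $2/3$). Hence $\psi = h^{-1}\circ \xi_t\circ h$ for some $t\in\mathbb{N}$ (the constant solutions pull back to the two obvious constant maps, which are trivially piecewise linear). Thus the whole question reduces to: \emph{for which $t$ is $h^{-1}\circ \xi_t\circ h$ piecewise linear?} The hypothesis hands us one particular $t_0$, namely the one realized by the assumed piecewise linear solution with tangent $\neq 2^k$ at $0$, and the goal is to bootstrap from this single $t_0$ to all $t$.

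The key structural fact I would use is multiplicativity of the family $\{\xi_t\}$ under composition: one checks directly from~\eqref{eq:1.3} that $\xi_s\circ \xi_t = \xi_{st}$ (this is essentially the semigroup statement behind part 2 of Proposition~\ref{prop:1}). Consequently, if $\psi_{t_0} = h^{-1}\circ\xi_{t_0}\circ h$ is piecewise linear, then so is every iterate $\psi_{t_0}^{\,n} = h^{-1}\circ\xi_{t_0^n}\circ h$, so all $t$ of the form $t_0^n$ give piecewise linear solutions. To reach arbitrary $t$, I would additionally observe that $\xi_2 = f$ itself (from~\eqref{eq:1.3} with $t=2$), so $\psi_2 = g$, which is piecewise linear by assumption; hence all $t$ of the form $2^m t_0^n$ work. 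The arithmetic input is then the following: since $t_0$ is not a power of $2$, it has an odd prime factor, and I would like the multiplicative semigroup generated by $2$ and $t_0$ to be "large enough" that every $\xi_t$ is forced to be piecewise linear by a comparison/closure argument rather than by literally being one of the $\xi_{2^m t_0^n}$.

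This is where the firmness hypothesis enters, and where I expect the main obstacle to lie. The point is that being piecewise linear is not just a property of finitely many special $t$; one must show the set $T = \{\,t : h^{-1}\circ\xi_t\circ h \text{ is piecewise linear}\,\}$ is all of $\mathbb{N}$. I would argue that $T$ is closed under the operations $t\mapsto st$ for $s\in T$ (composition), and — crucially — that membership $t\in T$ is equivalent to a finiteness condition on the conjugacy $h$: namely $\psi_t$ is piecewise linear iff $h$ maps the (finite) partition points of $\xi_t$, which lie in $f^{-\infty}(0)$, to points of $g^{-\infty}(0)$ across which $h$ is affine. For a firm carcass map, the kinks of $g$ sit inside $g^{-\infty}(0)$ and the branches of $g$ are affine, so $h$ is "piecewise affine relative to the pre-image lattice" in a precise sense; having one non-dyadic $t_0\in T$ pins down the slope ratios of $h$ on consecutive intervals of the $t_0$-adic (hence, after combining with the dyadic structure coming from $f$, on a dense refining family of) partitions, forcing $h$ to be genuinely piecewise linear — after which \emph{every} $\psi_t = h^{-1}\circ\xi_t\circ h$ is a composition of piecewise linear maps and we are done. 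The delicate part is the number-theoretic/combinatorial step that a single non-dyadic slope datum propagates through all scales; I would expect to need a lemma (likely imported or adapted from~\cite{Plakh-Arx-Derivat-6}) saying that the slopes of $h$ on the intervals of the $n$-th pre-image partition satisfy a multiplicative recursion, so that one non-dyadic constraint rigidifies the entire hierarchy.
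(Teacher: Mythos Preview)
Your strategic skeleton is exactly the paper's: reduce to showing the conjugacy $h$ is piecewise linear, after which every solution $\psi = h\circ\xi_t\circ h^{-1}$ is a composition of piecewise linear maps and we are done. You also correctly locate the crux --- ``one non-dyadic constraint rigidifies the entire hierarchy'' --- and you are honest that this is the delicate part. The problem is that this \emph{is} the entire content of the theorem, and your proposal does not actually prove it.

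Two specific gaps. First, the multiplicativity detour ($\xi_s\circ\xi_t=\xi_{st}$, hence $T$ is a sub-semigroup containing $2$ and $t_0$) is correct but does not close the argument: the semigroup generated by $2$ and $t_0$ is very far from all of $\mathbb{N}$ (e.g.\ if $t_0=3$ you never reach $t=5$), and knowing $\psi_t$ is piecewise linear for infinitely many $t$ does not by itself force $h$ to be piecewise linear. You need a mechanism that extracts information about $h$ from a single $t_0$, not a density argument in $T$. Second, the sentence ``having one non-dyadic $t_0\in T$ pins down the slope ratios of $h$'' is exactly the lemma that has to be \emph{proven}, and it does not come from~\cite{Plakh-Arx-Derivat-6}; the paper proves it here as Lemma~\ref{lema:3.5}. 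The argument genuinely uses both hypotheses: firmness gives the periodic parameters $l_0,\ldots,l_{2^{n_0}-1}$ (Remarks~\ref{rem:2.9}--\ref{rem:2.12}) that control all interval lengths $\#I_{n,k}$; piecewise linearity of the given $\psi$ on small intervals near $0$ gives, via Lemma~\ref{lema:2.15}, a ratio identity \eqref{eq:3.13} comparing $\#I_{n+1,2k+1}/\#I_{n+1,2k}$ with a sum of $2s$ consecutive $\#I_{\cdot,2sk+i}$ (here $t_0=s\cdot 2^m$, $s$ odd). Because $s$ is odd and coprime to $2^{n_0}$, running $k$ through an arithmetic progression makes the index $2sk$ hit every residue class mod $2^{n_0}$, and the resulting system forces $l_{k}/l_{k-1}$ to be constant, hence all $\delta_k$ equal; Lemma~\ref{lema:2.17} then gives piecewise linearity of $h$. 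None of this combinatorics is present in your outline, and without it the proof does not go through.
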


\begin{theorem}\label{th:3}
Let $g$ be a carcass map, which is topologically conjugated to the
tent map. If all the solutions of~\eqref{eq:1.4} are piecewise
linear, then the conjugacy of $g$ and the tent map is piecewise
linear.
\end{theorem}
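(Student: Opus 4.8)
The plan is to move the equation over to the tent map via the conjugacy, apply Proposition~\ref{prop:1}, and then extract the piecewise linearity of the conjugacy from the self-similarity that a piecewise linear solution forces on it. Fix a topological conjugacy $h$, so $h\circ f=g\circ h$; comparing the two maps at $0$ and at their turning points shows $h$ is increasing with $h(0)=0$, $h(1)=1$, $h(1/2)=v$. If $\xi$ solves~\eqref{eq:1.2} then $h\circ\xi\circ h^{-1}$ solves~\eqref{eq:1.4}, and conversely, so by Proposition~\ref{prop:1} the non-constant solutions of~\eqref{eq:1.4} are exactly $\psi_t:=h\circ\xi_t\circ h^{-1}$, $t\in\mathbb N$, and the hypothesis says every $\psi_t$ is piecewise linear. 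Since $\xi_t$ equals $y\mapsto ty$ on $[0,1/t]$, for $0\le x\le h(1/t)$ one has $\psi_t(x)=h\bigl(t\,h^{-1}(x)\bigr)$; as $\psi_t$ is piecewise linear with $\psi_t(0)=0$ it equals $x\mapsto\lambda_t x$ near $0$, and applying $h^{-1}$ gives the key relation $h^{-1}(\lambda_t x)=t\,h^{-1}(x)$ on a right neighbourhood of $0$. Iterating this is incompatible with $h^{-1}\le 1$ unless $\lambda_t>1$; writing it for two values of $t$ makes $t\mapsto\lambda_t$ strictly increasing; and together with $\lambda_{2^k}=\lambda_2^{\,k}$, $\lambda_{3^k}=\lambda_3^{\,k}$ this forces $\log\lambda_2/\log\lambda_3=\log2/\log3\notin\mathbb Q$ --- in fact $\lambda_t=t^{\gamma}$ for one fixed $\gamma>0$.

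Write $\eta:=h^{-1}$. The relations $\eta(\lambda_2 x)=2\eta(x)$ and $\eta(\lambda_3 x)=3\eta(x)$ hold on a common $[0,\rho]$, $\rho>0$; chaining them --- contracting first and only then expanding, so all intermediate points stay in $[0,\rho]$ --- yields $\eta(\lambda_2^{m}\lambda_3^{n}x_0)=2^{m}3^{n}\eta(x_0)$ for every $(m,n)\in\mathbb Z^{2}$ with $\lambda_2^{m}\lambda_3^{n}x_0\le\rho$. Since $\log\lambda_2/\log\lambda_3$ is irrational these points are dense in $(0,\rho]$, so continuity of $\eta$ gives $\eta(x)=c\,x^{1/\gamma}$ on $[0,\rho]$, i.e. $h(y)=c'y^{\gamma}$ on some $[0,\delta]$. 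I would then propagate this local model outward using $h\circ f=g\circ h$ in the form $h(z)=g\bigl(h(z/2)\bigr)$ (valid for all $z\in[0,1]$ because $f$ is multiplication by $2$ on $[0,1/2]$): as $g$ is piecewise linear, induction on $n$ shows that on $[0,2^{n}\delta]$ the map $h$ is a finite concatenation of functions $y\mapsto ay^{\gamma}+b$ with $a>0$, and after finitely many steps this describes $h$ on all of $[0,1]$.

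It remains to show $\gamma=1$; then the description above makes $h$ a finite concatenation of maps $y\mapsto ay+b$, i.e. piecewise linear, which is the assertion. Using $h(z)=g(h(z/2))$ near $z=1$ together with the linearity of $g$ on its rightmost piece gives $h(1-s)=1-Ks^{\gamma}+o(s^{\gamma})$ as $s\to 0^{+}$; feeding this into $h\circ f=g\circ h$ near the turning point, where $f$ has a corner ($f(y)=1-2|y-1/2|$) and $g$ is linear on each side of its maximum $v$, produces $|h(y)-v|\asymp|y-1/2|^{\gamma}$ as $y\to 1/2$. But the concatenation structure shows $h$ has a finite, nonzero one-sided derivative at $1/2$, i.e. $|h(y)-v|\asymp|y-1/2|$ on at least one side; comparing the two growth rates forces $\gamma=1$.

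The step I expect to be the main obstacle is the passage from the self-similarity relations near $0$ to the genuine power law $h(y)=c'y^{\gamma}$ on a whole interval. Two points need care: the relations $h^{-1}(\lambda_t x)=t\,h^{-1}(x)$ only hold on a fixed small neighbourhood of $0$, so the forward/backward chaining must be organised to stay inside it; and the density of $\{\lambda_2^{m}\lambda_3^{n}x_0\}$ rests entirely on the irrationality of $\log 2/\log 3$. Once $h$ is pinned down as a concatenation of power functions with a common exponent, the remaining work --- propagating along the conjugacy and eliminating the exponent at the turning point --- is comparatively routine, and it is there that the hypothesis that $g$ itself is piecewise linear is used.
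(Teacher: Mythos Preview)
Your argument is correct and takes a genuinely different route from the paper's. The paper proves Theorem~\ref{th:3} combinatorially via Lemma~\ref{lema:3.2}: using Lemma~\ref{lema:3.1} to locate the first kink of each $\psi_t$, it writes the pre-images $\mu_{n,k}$ near $0$ as $\tau_k\cdot\mu_{n,1}$ with $\tau_k=\psi_k'(0)$, then pushes a short interval at $0$ through a kink-free chain of branches of $g$ (via $1$ and $v$ back into $[0,a]$) to obtain the reflection identity $\#I_{s,k}=\#I_{s,2^n-k-1}$; this collapses to $\#I_{n,k}=\#I_{n,k+1}$ for all small intervals, so $h$ is linear near $0$, and Lemma~\ref{lema:2.17} finishes. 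Your route is analytic: from the piecewise linearity of every $\psi_t$ you extract the multiplicative, strictly increasing map $t\mapsto\lambda_t$, conclude $\lambda_2=2^\gamma$, $\lambda_3=3^\gamma$, and use the irrationality of $\log 2/\log 3$ together with a density argument to pin $h^{-1}$ down as a pure power $cx^{1/\gamma}$ near $0$; propagation through $h(z)=g(h(z/2))$ then renders $h$ globally as a finite concatenation of maps $y\mapsto ay^\gamma+b$, and comparing this (which has finite nonzero one-sided derivatives everywhere on $(0,1]$) with the exact relation $1-h(1-s)=Ks^\gamma$ coming from the conjugacy forces $\gamma=1$. Your approach is more conceptual---it isolates a single scalar obstruction $\gamma$ and eliminates it through an incompatibility of local expansions---while the paper's stays entirely within the combinatorics of the pre-images $\mu_{n,k}$ already set up for Theorem~\ref{th:2} and never needs the power-law model for $h$.
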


Thus, piecewise linear conjugacy of carcass maps naturally appears
in the study of the properties of conjugated carcass maps and firm
carcass maps. Remark that for any increasing piecewise linear
homeomorphism $h:\, [0, 1]\rightarrow [0, 1]$ we can construct a
carcass map $g:\ [0, 1]\rightarrow [0, 1]$ by formula
\begin{equation}\label{eq:1.5}g = h \circ f \circ h^{-1},
\end{equation} where $f$ %
is the tent map. Clearly, the constructed map $g$ is piecewise
linear as composition of linear maps, and will, by the
construction, satisfy
 \begin{equation}\label{eq:1.6} h \circ f
= g\circ h.\end{equation} Since $h(0)=0$, then directly
by~\eqref{eq:1.5} obtain that $g(0)=g(1)=0$. Conclude that the
constructed $g$ is a carcass map, because a topological
conjugation preserves the monotonicity of maps.

Thus, there are ``a lot'' of carcass maps, which are topologically
conjugated to the tent-map via the piecewise linear conjugacy,
precisely, any increasing piecewise linear conjugacy gives such
map. From another hand, the following facts show that the
piecewise linearity of the conjugacy of a carcass map and the tent
map is a ``rare'' property.

\begin{proposition}\cite[Lemmas~1 and~13]{UMZh-2016}
Let the tent map be topologically conjugated with the carcass map
$g$ via a piecewise linear conjugacy. Denote $x_0$ the positive
fixed point of $g$. Then $g'(0)=2$ and $g'(x_0-)\cdot g'(x_0+)
=4$.~\footnote{Here and below we denote $g'(x_0-)$ and $g'(x_0+)$
the left and the right derivative of $g$ at $x_0$.}
\end{proposition}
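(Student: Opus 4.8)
The plan is to unpack the hypothesis into the algebraic identity $g = h\circ f\circ h^{-1}$, where $h$ is a piecewise linear homeomorphism of $[0,1]$, and then to compute the two required derivatives by the chain rule, using that the tent map $f$ is affine (with slope $\pm 2$) near the two points in question. First I would note that $h$ is necessarily \emph{increasing}: both $g$ and $f$ are unimodal, so both fix $0$; if $h$ were decreasing it would satisfy $h(0)=1$, whence $g(0)=h(f(h^{-1}(0)))=h(f(1))=h(0)=1$, contradicting $g(0)=0$. Thus $h(0)=0$, $h(1)=1$, and, having only finitely many kinks, $h$ is affine on a one-sided neighbourhood of every point and has everywhere positive one-sided derivatives; the same then holds for $h^{-1}$.

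For $g'(0)$: the tent map is affine with slope $2$ on $[0,1/2]$, and $h$ is affine with some slope $a:=h'(0+)>0$ on a segment $[0,\delta]$, so $h^{-1}$ is affine with slope $1/a$ near $0$. For $x>0$ small enough that $h^{-1}(x)<1/2$ and $f(h^{-1}(x))=2h^{-1}(x)<\delta$, the chain rule gives $g'(x)=a\cdot 2\cdot\frac1a=2$, and letting $x\to 0+$ yields $g'(0)=2$.

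For the product at the positive fixed point: the tent map has exactly two fixed points, $0$ and $2/3$, the latter lying in the decreasing branch where $f'=-2$. Since a conjugacy carries fixed points to fixed points and $h(0)=0$, we get $x_0=h(2/3)$; moreover $2/3$ is not the turning point of $f$, so $x_0$ is not the turning point of $g$, and $g$ is affine on a one-sided neighbourhood of $x_0$ from each side. Put $b_-:=h'(2/3-)>0$ and $b_+:=h'(2/3+)>0$. The key point is that $f$ reverses orientation at $2/3$: for $x$ slightly less than $x_0$ we have $h^{-1}(x)$ slightly less than $2/3$, hence $f(h^{-1}(x))$ slightly greater than $2/3$, so the outer $h$ is evaluated with slope $b_+$; symmetrically, for $x$ slightly greater than $x_0$ the outer $h$ is evaluated with slope $b_-$. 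Hence
\[
g'(x_0-)=b_+\cdot(-2)\cdot\frac1{b_-},\qquad
g'(x_0+)=b_-\cdot(-2)\cdot\frac1{b_+},
\]
and multiplying, the factors $b_\pm$ cancel, leaving $g'(x_0-)\cdot g'(x_0+)=(-2)^2=4$.

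I expect the only subtlety to be the bookkeeping in the last display: one must keep careful track of which affine piece of $h$ is active when $g$ is formed just to the left and just to the right of $x_0$. The orientation reversal of $f$ at its positive fixed point swaps the two sides, and this swap is exactly what forces $b_-$ and $b_+$ to cancel against the reciprocal slopes $1/b_-$, $1/b_+$ produced by $h^{-1}$. At $0$, by contrast, $f$ preserves orientation, so each one-sided derivative of $g$ is individually conjugacy-invariant, which is why $g'(0)=2$ on the nose. All remaining steps are routine once one records that a piecewise linear homeomorphism of $[0,1]$ with finitely many kinks is affine near each point from each side.
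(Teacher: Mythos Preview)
The paper does not supply its own proof of this proposition; it is quoted from~\cite{UMZh-2016} and merely stated. Your argument is correct and is the natural one: writing $g=h\circ f\circ h^{-1}$ with $h$ an increasing piecewise linear homeomorphism of $[0,1]$ and applying the chain rule with the one-sided slopes of $h$ at $0$ and at $2/3$ yields both conclusions. The orientation reversal of $f$ at its positive fixed point $2/3$ is exactly what makes the outer factor $b_{\pm}$ pair with the inner factor $1/b_{\mp}$, so that the product $g'(x_0-)\,g'(x_0+)$ is independent of $h$; you tracked this correctly.
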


\begin{proposition}\cite[Theorem~2]{UMZh-2016}
For any $v\in (0, 1)$ let $\widehat{g}$ be increasing piecewise
linear map $\widehat{g}:\, [0, v]\rightarrow [0, 1]$ such that
$\widehat{g}(0) = 1-\widehat{g}(v) = 0$ and $\widehat{g}\,
'(0)=2$. There exists the unique carcass map $g$, which coincides
with $\widehat{g}$ on $[0, v]$ and is topologically conjugated
with a tent map via a piecewise linear conjugacy.
\end{proposition}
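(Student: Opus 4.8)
The plan is to recast the whole problem as a functional equation for the conjugating homeomorphism. Suppose $h\colon[0,1]\to[0,1]$ is an increasing piecewise linear homeomorphism with $h(0)=0$, $h(1)=1$ and $g:=h\circ f\circ h^{-1}$ agrees with $\widehat{g}$ on $[0,v]$. Substituting $u=1/2$ into $h(f(u))=g(h(u))$ and using that a unimodal map attains its maximal value $1$ only at its vertex forces $h(1/2)=v$. Then, since $f$ is the doubling map on $[0,1/2]$ and $h(u)\in[0,v]$ there, one gets $h(2u)=\widehat{g}(h(u))$ for all $u\in[0,1/2]$. Conversely, if $h$ is any increasing piecewise linear homeomorphism of $[0,1]$ with $h(0)=0$, $h(1)=1$, $h(1/2)=v$ satisfying this identity on $[0,1/2]$, then $g:=h\circ f\circ h^{-1}$ is a carcass map (a composition of piecewise linear maps, inheriting the monotonicity pattern of $f$), it is piecewise linearly conjugate to $f$ by construction, it coincides with $\widehat{g}$ on $[0,v]=h([0,1/2])$, and its decreasing branch is $x\mapsto h\bigl(2-2h^{-1}(x)\bigr)$ on $[v,1]$. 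So the proposition is equivalent to the assertion that there is exactly one increasing piecewise linear homeomorphism $h$ of $[0,1]$ with $h(0)=0$, $h(1)=1$, $h(1/2)=v$ and $h(2u)=\widehat{g}(h(u))$ on $[0,1/2]$.

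For uniqueness I would observe that the identity forces the values of $h$ at the dyadic points $2^{-k}$: taking $u=2^{-k-1}$ gives $h(2^{-k})=a_{k-1}$, where $a_0:=v$ and $a_k:=\widehat{g}^{-1}(a_{k-1})$. Since $h$ is a homeomorphism fixing $0$, necessarily $a_k\to0$; and because $\widehat{g}\,'(0)=2$ we have $\widehat{g}^{-1}(y)=y/2$ near $0$, so $a_k=a_{k-1}/2$ for all large $k$, and hence $2^{k}a_{k-1}$ is eventually constant, say equal to $c$. A piecewise linear $h$ is linear on some segment $[0,\delta]$, and the equalities $h(2^{-k})=a_{k-1}$ then force the slope of that segment to be $c$. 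Finally the functional identity propagates $h$ uniquely: from $h$ on any $[0,a]$ with $a\leq 1/2$ it determines $h$ on $[0,2a]$, so finitely many doublings recover $h$ on $[0,1/2]$, and one more application recovers $h$ on $[0,1]$. Thus $h$, hence $g$, is unique.

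For existence I would run the same recursion constructively; here one first needs the condition $\widehat{g}(x)>x$ for $x\in(0,v)$ — which is in any case necessary, since the non-zero fixed point of the tent map lies in its decreasing branch — so that the iterates $a_k$ above really do decrease monotonically to $0$. Fix $K$ large, put $c:=2^{K}a_{K-1}$, pick $\delta>0$ so small that $\widehat{g}$ is the doubling map on $[0,c\delta]$, set $h(u):=cu$ on $[0,\delta]$, and extend $h$ to $[0,1]$ by declaring $h(2u):=\widehat{g}(h(u))$. Each stage doubles the interval on which $h$ is known, so finitely many stages suffice, and each stage composes a piecewise linear map with the piecewise linear $\widehat{g}$; thus the resulting $h$ has finitely many kinks. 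One then checks the routine points: the choice of $\delta$ makes the germ $h(u)=cu$ compatible with the identity near $0$, so the recursion glues continuously and strictly monotonically at each junction $2^{j}\delta$; and $h(1/2)=\widehat{g}(h(1/4))=\widehat{g}(a_1)=v$ while $h(1)=\widehat{g}(v)=1$, so $h$ is a homeomorphism of $[0,1]$. Then $g:=h\circ f\circ h^{-1}$ is the desired carcass map.

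I expect the existence step to be the main obstacle: one must show carefully that the recursion $h(2u):=\widehat{g}(h(u))$ glues continuously and strictly monotonically onto the already constructed portion at every doubling, that the forced values $a_k$ decay geometrically so that $c$ is well defined and independent of the auxiliary index $K$, and that the number of kinks of $h$ stays finite throughout. The uniqueness half is then essentially bookkeeping with the $a_k$ together with the remark that a piecewise linear map is linear near $0$; one may also invoke Proposition~\ref{prop:1} to see that a carcass map conjugate to $f$ admits at most one piecewise linear conjugacy, since two of them would differ by an increasing homeomorphism commuting with $f$, and the only such homeomorphism is the identity.
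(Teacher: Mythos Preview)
The paper does not supply its own proof of this proposition: it is quoted verbatim from \cite[Theorem~2]{UMZh-2016} in the introduction, alongside two companion results from the same source, and no argument is given here. There is therefore nothing in the present paper to compare your attempt against.

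On its own merits your route is sound. Recasting the problem as the functional equation $h(2u)=\widehat{g}(h(u))$ on $[0,1/2]$ for the conjugacy $h$, pinning down the forced values $h(2^{-k})=a_{k-1}$ with $a_k=\widehat{g}^{\,-1}(a_{k-1})$, reading off the slope $c=\lim_{k\to\infty}2^{k}a_{k-1}$ from $\widehat{g}\,'(0)=2$, and then propagating outward by the doubling recursion is exactly the natural construction, and your checks (continuity at each junction, finitely many kinks after finitely many doublings, $h(1/2)=v$ and $h(1)=1$) are correct. You are also right to flag that the statement as quoted is missing a hypothesis: one needs $\widehat{g}(x)>x$ on $(0,v)$, since otherwise the extended map would have a positive fixed point on its increasing branch, which is impossible for a carcass map conjugate to $f$, and the existence half fails. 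Whether \cite{UMZh-2016} proceeds the same way or instead builds $h$ from the pre-image data $\mu_{n,k}$ in the spirit of Lemma~\ref{lema:2.4} and Lemma~\ref{lema:2.17} cannot be seen from the present text.
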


\begin{proposition}\cite[Theorem~3]{UMZh-2016}
For any $v\in (0, 1)$ let $\widehat{g}$ be decreasing piecewise
linear map $\widehat{g}:\, [v, 1]\rightarrow [0, 1]$ such that
$\widehat{g} (v) = 1-\widehat{g}(1) = 1$ and $\widehat{g}\,
'(x_0-)\cdot \widehat{g}\, '(x_0+) =4$, where $x_0$ is the fixed
point of $\widehat{g}$. There exists the unique carcass map $g$,
which coincides with $\widehat{g}$ on $[v, 1]$ and is
topologically conjugated with a tent map via a piecewise linear
conjugacy.
\end{proposition}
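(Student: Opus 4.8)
The plan is to reduce the whole statement to the construction and uniqueness of the conjugating homeomorphism $h$, and to extract $h$ from a single functional equation governed by the right branch $\widehat{g}$. Since $h$ must be an increasing homeomorphism of $[0,1]$ with $h\circ f=g\circ h$, the distinguished points of $f$ force $h(0)=0$, $h(1)=1$, $h(1/2)=v$ (the peak maps to the peak), and $h(2/3)=x_0$ (the positive fixed point of $f$ maps to that of $g$, which by hypothesis is the fixed point of $\widehat{g}$). Restricting the conjugacy to the decreasing branch, where $f(x)=2-2x$, gives $\widehat{g}(h(x))=h(2-2x)$ for $x\in[1/2,1]$, that is
\[ h(y)=\widehat{g}\bigl(h(1-y/2)\bigr),\qquad y\in[0,1]. \]
Conversely, once such an $h$ is produced, the left branch of $g$ is forced: writing $\sigma=h\circ r\circ h^{-1}$ with $r(x)=1-x$, one sets $g_l=\widehat{g}\circ\sigma$ on $[0,v]$, and the tent‑map symmetry $f(x)=f(1-x)$ together with the displayed identity yields $g\circ h=h\circ f$ on all of $[0,1]$. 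As $\sigma$ and $\widehat{g}$ are piecewise linear as soon as $h$ is, the resulting $g$ is automatically a carcass map. Thus the theorem is equivalent to the existence and uniqueness of a piecewise linear increasing homeomorphism $h$ solving the displayed equation with the stated anchor values.

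For existence I would solve the equation by iteration. The map $T(y)=1-y/2$ contracts $[0,1]$ onto $[1/2,1]$ with unique fixed point $2/3$ and ratio $1/2$, so $T^n(y)\to 2/3$, and iterating gives $h(y)=\widehat{g}^{\,n}\bigl(h(T^n y)\bigr)$. Here the hypothesis enters decisively: near $x_0$ the decreasing map $\widehat{g}$ interchanges the two sides of $x_0$, so $\widehat{g}^{\,2}$ preserves each side with one‑sided slope $\widehat{g}'(x_0-)\cdot\widehat{g}'(x_0+)=4$; hence $\widehat{g}^{\,n}$ expands like $2^n$ near $x_0$, while $T^n$ contracts like $2^{-n}$. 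These exactly balanced rates make the otherwise marginal limit $h(y)=\lim_n \widehat{g}^{\,n}(h_0(T^n y))$ converge to a nondegenerate function for any admissible continuous $h_0$ with $h_0(2/3)=x_0$; a product different from $4$ would force the limit to diverge or degenerate. Because the operator $\phi\mapsto\widehat{g}\circ(\phi\circ T)$ sends increasing functions to increasing functions (two orientation reversals cancel), the limit $h$ is increasing, and the anchor values pin it down as a homeomorphism of $[0,1]$.

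The crux, and the step I expect to be hardest, is that this $h$ is genuinely piecewise linear. Setting $H(\delta)=h(2/3+\delta)-x_0$ and using that $\widehat{g}$, being piecewise linear, is affine on a one‑sided neighborhood of $x_0$, the two‑step version of the equation collapses near the fixed point to the self‑similarity relation $H(\delta)=4\,H(\delta/4)$ on each side of $0$. Every continuous monotone solution has the form $H(\delta)=\delta\,p(\log_4\delta)$ for a $1$‑periodic $p$, so a single genuine kink would be reproduced at every scale $4^{-k}$ and accumulate at $0$; the only solution with finitely many kinks is therefore the linear one $H(\delta)=c\delta$. Thus $h$ is affine on a one‑sided neighborhood of $2/3$ on each side. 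Feeding these linear germs back through $h(y)=\widehat{g}(h(1-y/2))$ and using that $T^{-1}$ expands, finitely many pullback steps cover $[1/2,1]$, each introducing only the finitely many kinks arising from the kinks of $\widehat{g}$; hence $h$ has finitely many kinks on $[1/2,1]$, and the folding $g_l=\widehat{g}\circ\sigma$ transfers this to $[0,1/2]$. Both $h$ and $g$ are therefore piecewise linear. (The same computation shows that a product of one‑sided derivatives different from $4$ would yield a monotone germ behaving like $\delta^{\log_4\mu}$, precluding piecewise linearity, in agreement with the necessity of the condition recorded above.)

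Uniqueness follows from the rigidity of the construction. Any carcass map $g$ that completes $\widehat{g}$ and is piecewise‑linearly conjugate to $f$ yields a conjugacy $h$ which is an increasing homeomorphism satisfying the same functional equation with the same anchor values; by the balanced‑rate argument the iterative limit is independent of the admissible initial data, so $h$ is uniquely determined on $[1/2,1]$, hence on $[0,1]$ through the left‑branch formula, and therefore $g=h\circ f\circ h^{-1}$ is unique.
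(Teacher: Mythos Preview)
The paper does not prove this proposition; it is quoted without argument from \cite[Theorem~3]{UMZh-2016} as background, so there is no proof in the present paper to compare against. I can only evaluate your proposal on its own terms.

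Your overall architecture is right: reduce to building the conjugacy $h$, derive the functional equation $h(y)=\widehat{g}\bigl(h(1-y/2)\bigr)$ from the decreasing branch, analyse it near the fixed point $2/3$ of $T(y)=1-y/2$, and use the hypothesis $\widehat{g}'(x_0-)\,\widehat{g}'(x_0+)=4$ as the local consistency condition. That mechanism is exactly what drives the result.

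There are, however, two genuine gaps in the execution. First, you claim the iterative limit $\lim_n \widehat{g}^{\,n}\bigl(h_0(T^n y)\bigr)$ converges and is \emph{independent} of the admissible seed $h_0$. This is precisely what fails at a neutral fixed point: $T$ contracts by $1/2$ while $\widehat{g}^{2}$ expands by $4$ near $x_0$, so the two-step composite has unit linearisation, and a direct computation shows the limit near $2/3$ inherits the one-sided derivatives of $h_0$ at $2/3$. Different seeds give different limits, so the independence claim is false; your uniqueness paragraph relies on the same assertion and therefore does not stand either. (There is also a domain issue: $\widehat{g}:[v,1]\to[0,1]$, so $\widehat{g}^{\,n}$ is only defined on a shrinking neighbourhood of $x_0$, not globally.) Second, the piecewise-linearity step is circular. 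You correctly note that the two-step relation gives $H(\delta)=4\,H(\delta/4)$ and that among functions with finitely many kinks only the linear one satisfies it; but your limiting $h$ is not yet known to have finitely many kinks, so this does not establish linearity near $2/3$.

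Both problems vanish if you reverse the order of construction. Rather than take a limit and then argue it is piecewise linear, \emph{define} $h$ to be affine on each side of $2/3$ in a small interval, with one-sided slopes $c_\pm$; the functional equation at first order forces $c_+=\tfrac{|\widehat{g}'(x_0-)|}{2}\,c_-$ and $c_-=\tfrac{|\widehat{g}'(x_0+)|}{2}\,c_+$, which is consistent precisely when the product equals $4$, leaving one free scale fixed by $h(1)=1$. Then extend outward via $h(y)=\widehat{g}(h(Ty))$: since $T^{-1}$ doubles lengths, finitely many steps cover $[0,1]$, each adding only the finitely many kinks coming from $\widehat{g}$, so $h$ is piecewise linear by construction. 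Uniqueness then follows because any piecewise linear solution is affine on some neighbourhood of $2/3$, where the equation pins down $c_\pm$ as above, and the outward extension is determined.
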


\section{Properties of unimodal maps}

\subsection{The connection of topological conjugacy and semi conjugacy}

\begin{lemma}\label{lema:2.1}
Suppose that for a unimodal map $g:\, [0, 1]\rightarrow [0, 1]$,
and the conjugacy $h: [0, 1]\rightarrow [0, 1]$ the
equality~\eqref{eq:1.6} holds, where $f$ is the tent-map. For any
(continuous; surjective)  solution $\psi:\, [0, 1]\rightarrow [0,
1]$ of~\eqref{eq:1.4} there exists a (continuous; surjective)
solution $\xi$ of~\eqref{eq:1.2} such that
\begin{equation}\label{eq:2.1} \psi = h\circ \xi\circ h^{-1}.
\end{equation}
\end{lemma}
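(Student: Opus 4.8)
The plan is to use the conjugacy $h$ to transport the functional equation~\eqref{eq:1.4} back to the functional equation~\eqref{eq:1.2} for the tent map, and then apply Proposition~\ref{prop:1}. Concretely, given a continuous surjective solution $\psi$ of $\psi\circ g = g\circ\psi$, I would define $\xi := h^{-1}\circ\psi\circ h$. Since $h:\,[0,1]\to[0,1]$ is a homeomorphism (being a topological conjugacy) and $\psi$ is continuous and surjective, the composition $\xi$ is automatically continuous and surjective as well; moreover $\xi$ maps $[0,1]$ into $[0,1]$. The only thing left to verify is that this $\xi$ solves~\eqref{eq:1.2}, and that~\eqref{eq:2.1} holds, but the latter is just a rearrangement of the definition: from $\xi = h^{-1}\circ\psi\circ h$ we immediately get $h\circ\xi\circ h^{-1} = \psi$.

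The computational heart of the argument is checking $\xi\circ f = f\circ\xi$. Here I would use~\eqref{eq:1.6} in the form $f = h^{-1}\circ g\circ h$, which follows from $h\circ f = g\circ h$ by composing with $h^{-1}$ on the left and $h^{-1}$ on the right (legitimate since $h$ is invertible). Then one computes
\begin{equation}\label{eq:2.2}
\xi\circ f = (h^{-1}\circ\psi\circ h)\circ(h^{-1}\circ g\circ h)
= h^{-1}\circ\psi\circ g\circ h
= h^{-1}\circ g\circ\psi\circ h,
\end{equation}
where the last equality uses that $\psi$ solves~\eqref{eq:1.4}. Continuing,
\begin{equation}\label{eq:2.3}
h^{-1}\circ g\circ\psi\circ h
= (h^{-1}\circ g\circ h)\circ(h^{-1}\circ\psi\circ h)
= f\circ\xi,
\end{equation}
again inserting $h\circ h^{-1} = \mathrm{id}$ in the middle and using $f = h^{-1}\circ g\circ h$. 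This establishes~\eqref{eq:1.2} for $\xi$ and completes the proof.

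I do not anticipate a genuine obstacle: the statement is essentially the observation that conjugation by $h$ is a bijection between the solution sets of the two functional equations, and the proof is a short diagram chase. The only points requiring a modicum of care are (i) confirming that $h^{-1}$ exists and is continuous — which is exactly what ``$h$ is a conjugacy'' means, so it is automatic — and (ii) confirming that the continuity/surjectivity side conditions (the parenthetical hypotheses on $\psi$) are preserved under pre- and post-composition with the homeomorphisms $h$ and $h^{-1}$, which is immediate. If one wanted the converse direction (every solution $\xi$ of~\eqref{eq:1.2} gives a solution $\psi = h\circ\xi\circ h^{-1}$ of~\eqref{eq:1.4}) it would follow by the identical computation with the roles of $f$ and $g$ and of $h$ and $h^{-1}$ interchanged, but the lemma as stated only asks for the one direction.
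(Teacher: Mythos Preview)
Your proof is correct and is essentially the same approach as the paper's: both amount to the observation that conjugation by the homeomorphism $h$ carries solutions of~\eqref{eq:1.4} to solutions of~\eqref{eq:1.2}. The paper phrases this via two commutative diagrams and an intermediate map $\eta$ satisfying $\eta\circ f = g\circ\eta$, whereas you compute $\xi\circ f = f\circ\xi$ directly, but the content is identical.
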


\begin{proof}
As we have mentioned in~\cite{Plakh-Arx-Odesa}, the description of
the non-homeomorphic (surjective) solutions $\eta:\, [0,
1]\rightarrow [0, 1]$ of the functional equation
\begin{equation}\label{eq:2.2} \eta \circ f = g\circ
\eta,\end{equation} can be reduced to the description of the
non-homeomorphic (surjective) solutions $\xi:\, [0, 1]\rightarrow
[0, 1]$ of the functional equation~\eqref{eq:1.2}. Indeed, it is
clear from the commutative diagram $$ \xymatrix{ [0,\, 1]
\ar@/_3pc/@{-->}_{\xi}[dd] \ar^{f}[rr]
\ar_{\eta}[d] && [0,\, 1] \ar^{\eta}[d] \ar@/^3pc/@{-->}^{\xi}[dd]\\
[0,\, 1] \ar^{g}[rr] \ar^{h^{-1}}[d] && [0,\, 1] \ar_{h^{-1}}[d]\\
[0,\, 1] \ar^{f}[rr] && [0,\, 1] }$$ that there is one-to-one
correspondence
\begin{equation}\label{eq:2.3} \left\{\begin{array}{l}\xi  = h^{-1}\circ \eta,\\
\eta = h\circ \xi
\end{array}\right.\end{equation}
between the solutions $\eta$ of~\eqref{eq:2.2} and the solutions
$\xi$ of~\eqref{eq:1.2}.

For any continuous (surjective) solution $\psi$ of~\eqref{eq:1.4},
and the homeomorphism $h$, which satisfies~\eqref{eq:1.6}, write
the commutative diagram $$ \xymatrix{ [0,\, 1]
\ar@/_3pc/@{-->}_{\psi}[dd] \ar^{g}[rr]
\ar^{h^{-1}}[d] && [0,\, 1] \ar_{h^{-1}}[d] \ar@/^3pc/@{-->}^{\psi}[dd]\\
[0,\, 1] \ar^{f}[rr] \ar_{\eta}[d] && [0,\, 1] \ar^{\eta}[d]\\
[0,\, 1] \ar^{g}[rr] && [0,\, 1] }$$ which, similarly
to~\eqref{eq:2.3}, defines one-to-one correspondence between the
solutions of the functional equations~\eqref{eq:2.2}
and~\eqref{eq:1.4}.

\begin{equation}\label{eq:2.4}
\left\{\begin{array}{l}\psi  = \eta\circ h^{-1},\\
\eta = \psi\circ h.
\end{array}\right.\end{equation}
Now~\eqref{eq:2.1} follows from~\eqref{eq:2.3} and~\eqref{eq:2.4}.
\end{proof}

\subsection{Pre-images of $0$ under a carcass map and topological conjugation}

The next fact is the classical property of the tent map.

\begin{remark}\label{rem:2.2}
For every $n\geq 1$ we have that $$f^{-n}(0) = \left\{
\frac{k}{2^{n-1}},\, 0\leq k\leq 2^{n-1}\right\},$$ where $f$ is
the tent map.
\end{remark}

Let a carcass map $g$ be fixed till the end of this subsection.
By~\cite[Remark~3.1]{Plakh-Arx-Derivat-6}, the set $g^{-n}(0)$
consists of $2^{n-1}+1$ points.
Following~\cite[Notation~3.2]{Plakh-Arx-Derivat-6}, for any $n\geq
1$ denote
\begin{equation}\label{eq:2.5}
g^{-n}(0) = \{\mu_{n,k}(g),\, 0\leq k\leq 2^{n-1}
\}.\end{equation} where $\mu_{n,k}(g)<\mu_{n,k+1}(g)$ for all
$k,\, 0\leq k<2^{n-1}$.

\begin{remark}\cite[Remark~3.3]{Plakh-Arx-Derivat-6}\label{rem:2.3}
Notice that $\mu_{n,k}(g) = \mu_{n+1,2k}(g)$ for all $k,\, 0\leq k
\leq 2^{n-1}$.
\end{remark}

\begin{lemma}\cite[Lemma~3.6]{Plakh-Arx-Derivat-6}\label{lema:2.4}
For any unimodal maps $g_1,\, g_2:\ [0, 1]\rightarrow [0, 1]$ and
the conjugacy $h$, which satisfies
\begin{equation}\label{eq:2.6}
h\circ g_1 = g_2\circ h,
\end{equation} the equality $$ h(\mu_{n,k}(g_1)) =\mu_{n,k}(g_2)
$$ holds for all $n\geq 1$ and $k,\, 0\leq k\leq
2^{n-1}$.
\end{lemma}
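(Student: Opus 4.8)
The plan is to fix $n\geq 1$ and prove the statement $h(\mu_{n,k}(g_1))=\mu_{n,k}(g_2)$ for all admissible $k$ by induction on $n$, exploiting the fact that a conjugacy carries complete pre-images to complete pre-images and preserves the ordering of points on $[0,1]$. First I would record the set-theoretic identity that follows immediately from~\eqref{eq:2.6}: iterating gives $h\circ g_1^{\,n}=g_2^{\,n}\circ h$ for every $n$, hence $x\in g_1^{-n}(0)$ if and only if $h(x)\in g_2^{-n}(h(0))$. Since $h$ is a conjugacy of unimodal maps it must fix the endpoints, so $h(0)=0$ (this uses that $g_i(0)=0$ and that $0$ is the unique point with $g_i^{-1}$-orbit landing on an endpoint appropriately; alternatively it is part of the standing conventions on conjugacies in the cited paper). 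Therefore $h$ maps $g_1^{-n}(0)$ bijectively onto $g_2^{-n}(0)$, and by~\cite[Remark~3.1]{Plakh-Arx-Derivat-6} both sets have exactly $2^{n-1}+1$ elements.

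Next I would use monotonicity. The conjugacy $h:[0,1]\to[0,1]$ is a homeomorphism fixing $0$ and $1$, hence strictly increasing. An increasing bijection between two finite ordered subsets of $\mathbb{R}$ of the same cardinality must send the $k$-th smallest element of the first set to the $k$-th smallest element of the second. By the definition~\eqref{eq:2.5}, the points $\mu_{n,k}(g_1)$ are precisely the elements of $g_1^{-n}(0)$ listed in increasing order, and likewise for $g_2$; so $h(\mu_{n,k}(g_1))=\mu_{n,k}(g_2)$ for every $k$ with $0\leq k\leq 2^{n-1}$. Since $n$ was arbitrary, the lemma follows. (One could equally phrase this as an induction on $n$ using Remark~\ref{rem:2.3} to pass from level $n$ to level $n+1$, but the direct ordering argument is cleaner and the induction is not actually needed once the cardinality count is in hand.)

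The only genuinely delicate point is the claim that $h$ is \emph{increasing} and fixes $0$: if one instead only assumed $h$ is a homeomorphism intertwining $g_1$ and $g_2$, a priori $h$ could be decreasing, but then $h(0)=1$, and $g_2(h(0))=g_2(1)=0\neq h(g_1(0))=h(0)=1$ unless we are careful — in fact the intertwining relation~\eqref{eq:2.6} together with $g_i(0)=g_i(1)=0$ forces $h(\{0,1\})=\{0,1\}$ and then forces $h(0)=0$, because $0$ is a fixed point of each $g_i$ while $1$ is not. Once $h(0)=0$ is pinned down and $h$ is a homeomorphism of $[0,1]$ fixing an endpoint, it is automatically increasing, and the rest is the elementary observation about order-isomorphisms of finite sets. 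I expect the referee's cited proof to handle exactly this endpoint bookkeeping and then invoke the counting result from~\cite{Plakh-Arx-Derivat-6}.
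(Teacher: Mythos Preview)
The paper does not prove this lemma; it is quoted from \cite[Lemma~3.6]{Plakh-Arx-Derivat-6} with no accompanying argument, so there is no in-paper proof to compare against. Your argument is correct and is the natural one: the conjugacy relation iterates to $h\circ g_1^{\,n}=g_2^{\,n}\circ h$, the endpoint check forces $h(0)=0$ (since $h(\{0,1\})=\{0,1\}$ and $0$ is fixed by $g_2$ while $1$ is not), hence $h$ is increasing, and then $h$ restricts to an order-preserving bijection between the two $(2^{n-1}+1)$-element sets $g_1^{-n}(0)$ and $g_2^{-n}(0)$, which must therefore match up index by index.
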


Moreover, by~\cite[Theorem~7]{Plakh-Arx-Derivat-6}, the conjugacy
is unique in the case of Lemma~\ref{lema:2.4}.

For every $n\geq 1$ and $k,\, 0\leq k<2^{n-1}$ write $\mu_{n,k}$
instead of $\mu_{n,k}(g)$.

\begin{notation}\cite[Notation~4.7]{Plakh-Arx-Derivat-6}\label{not:2.5}
For any $n\geq 1$ and $k,\, 0\leq k< 2^{n-1}$ denote $I_{n,k} =
(\mu_{n,k},\, \mu_{n,k+1})$ and for any interval $I = (a,\, b)$
denote $\# I = b -a$.
\end{notation}

\begin{notation}\cite[Notation~4.9]{Plakh-Arx-Derivat-6}
For any $n\geq 1$ and $k,\, 0\leq k< 2^n$ denote
$$\delta_{n,k} =
\frac{\mu_{n+1,2k+1}-\mu_{n,k}}{\mu_{n,k+1}-\mu_{n,k}} = \frac{\#
I_{n+1,2k}}{\# I_{n,k}}.$$
\end{notation}

\subsection{Pre-images of $0$ under a firm carcass map}\label{sec:2.3}

Let $g$ be a firm carcass map, which will be fixed till the end of
this section. Denote by $n_0$ the minimal natural number such that
$g^{n_0}(x) = 0$ for each kink $x$ of $g$.

Write $\mu_{n,k}$ instead of $\mu_{n,k}(g)$ for all $n\geq 0$ and
$k,\, 0\leq k\leq 2^{n-1}$.

\begin{notation}
For every $a\in \{0; 1\}$ denote $$ \Rot(a) = 1-a.
$$
\end{notation}

\begin{remark}\cite[Remark~4.19]{Plakh-Arx-Derivat-6}\label{rem:2.8}
Let $n\geq n_0$ and $k,\, 0\leq k< 2^{n-1}$ have the binary
expansion
$$
k = \sum\limits_{i=0}^{n}x_i2^{n-i} $$ for all $n\geq 1$. %
Then $\delta(I_{n+1,k}) =
\Rot^{x_{n-n_0+1}}(\delta(I_{n_0,p_n})),$ where $$p_{i} =
\sum\limits_{j=i-n_0+2}^{i}\Rot^{x_{i+1-n_0}}(x_j)2^{\, i-j}
$$ for all $i,\, n_0+1\leq i\leq n$.
\end{remark}

It follows from Remark~\ref{rem:2.8} that for any $n\geq n_0$ and
$k,\ 0\leq k<2^{n-1}$ the number $\delta(I_{n,k})$ depends on the
last $n_0$ binary digits of $k$ and is independent on $n$ and
other digits of $k$.

There is an important corollary from Remark~\ref{rem:2.8}.

\begin{remark}\label{rem:2.9}
There are $l_0,\ldots, l_{2^{n_0}-1}$ with the following property.
For every $n\geq n_0$ and every $k,\, 0\leq k<2^{n-1}$ let
$k_0\geq 0$ and $j,\, 0\leq j<2^{n_0}$ be such that $k =
2^{n_0}\cdot k_0 +j$. Then
$$ \# I_{n,k} = l_j\cdot \# I_{n,2^{n_0}\cdot k_0}. $$
\end{remark}

\begin{notation}
1. For any $k,\, 0\leq k<2^{n_0}$ denote $$ \delta_k =
\delta(I_{n_0,k}).
$$

2. For any $k\geq 2^{n_0}$ denote by $$ \delta_k =
\delta(I_{n_0,k^*}),$$ where the last $n_0$ binary digits of $k$
and $k^*$ coincide and $0\leq k^*< 2^{n_0}$.
\end{notation}

The next fact follows from Remark~\ref{rem:2.8}.

\begin{remark}\label{rem:2.11}
Notice that $\delta_k = \delta_{k +2^{n_0}}$ for all $k\geq 0$.
\end{remark}

\begin{remark}\label{rem:2.12}
For every $n\geq n_0$ and $k,\, 0\leq k<2^{n-1}$ we have that $$
\# I_{n+n_0,i\cdot 2^{n_0}} =\frac{\#
I_{n,i}}{\sum\limits_{k=1}^{2^{n_0}-1}l_k}.
$$
\end{remark}

\begin{proof}
Since $$ \overline{I_{n,i}} =
\bigcup\limits_{k=0}^{2^{n_0}-1}\overline{I_{n+n_0,i\cdot 2^{n_0}
+k}},
$$ then, by Remark~\ref{rem:2.9}, $$
\# I_{n,i} = \sum\limits_{k=0}^{2^{n_0}-1}l_k\cdot
\#I_{n+n_0,i\cdot 2^{n_0}}
$$
and the fact follows.
\end{proof}

\subsection{Self-semiconjugation of carcass maps}

Let a carcass map $g$, such that the complete pre-image of $0$
under the action of $g$ is dense in $[0, 1]$, be fixed till the
end of this section. By~\cite[Theorem 7]{Plakh-Arx-Derivat-6} let
$h:\, [0, 1]\rightarrow [0, 1]$ be the unique conjugacy, such
that~\eqref{eq:1.6} holds.

\begin{remark}\label{rem:2.13}
The map $\xi_t$, defined by~\eqref{eq:1.3}, can be expressed by
the formula $$ \xi_t:\, x\rightarrow tx
$$ for all $x\in \left[0, \frac{1}{t}\right]$.
\end{remark}

\begin{remark}\label{rem:2.14}
For the map $\xi_t$, defined by~\eqref{eq:1.3}, $$
\xi_t(\mu_{n,k}(f)) = \mu_{n,tk}(f),$$ whenever $k\leq \left[
\frac{2^{n-1}}{t}\right]$.
\end{remark}

\begin{proof}
The remark follows from Remarks~\ref{rem:2.2} and~\ref{rem:2.13}.
\end{proof}

\begin{lemma}\label{lema:2.15}
For any self-semiconjugation $\psi$ of $g$ there is $t\in
\mathbb{N}$ such that $$ \psi(\mu_{n,k}(g)) = \mu_{n,kt}(g)
$$ for all $n\geq 1$ and $k\leq \left[
\frac{2^{n-1}}{t}\right]$.
\end{lemma}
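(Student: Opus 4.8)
The plan is to reduce the statement about the self-semiconjugation $\psi$ of the carcass map $g$ to the already-understood situation for the tent map $f$, using the conjugacy $h$ that satisfies~\eqref{eq:1.6}. First I would invoke Lemma~\ref{lema:2.1}: since the complete pre-image of $0$ under $g$ is dense, $g$ is topologically conjugated to $f$ via $h$, and every self-semiconjugation $\psi$ of $g$ has the form $\psi = h\circ \xi\circ h^{-1}$ for some continuous surjective solution $\xi$ of~\eqref{eq:1.2}. By Proposition~\ref{prop:1}, $\xi$ is either one of the constant maps (values $0$ or $2/3$) or equals $\xi_t$ for some $t\in\mathbb{N}$. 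I would handle the two constant cases separately or fold them in: the constant map $0$ corresponds to $t$ "equal to'' a degenerate value but one checks directly that $\psi(\mu_{n,k}(g))=0=\mu_{n,0}(g)$ for all admissible $k$, so the formula holds with $t$ chosen large enough that the range of admissible $k$ is just $\{0\}$; the constant $2/3$ case is excluded because $2/3 = h^{-1}$-image issues — actually $2/3$ is the fixed point of $f$, so the constant $2/3$ solution gives a constant self-semiconjugation, which again satisfies the conclusion only vacuously, and I would note this or restrict attention to the genuinely interesting case $\xi = \xi_t$.

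The heart of the argument is the case $\xi = \xi_t$. Here I would combine three facts. Lemma~\ref{lema:2.4}, applied to the conjugacy $h$ (with $g_1 = f$, $g_2 = g$), gives $h(\mu_{n,k}(f)) = \mu_{n,k}(g)$ for all $n\geq 1$ and $0\leq k\leq 2^{n-1}$; equivalently $h^{-1}(\mu_{n,k}(g)) = \mu_{n,k}(f)$. Remark~\ref{rem:2.14} gives $\xi_t(\mu_{n,k}(f)) = \mu_{n,tk}(f)$ whenever $k\leq \left[\frac{2^{n-1}}{t}\right]$. Then applying $h$ again via Lemma~\ref{lema:2.4} sends $\mu_{n,tk}(f)$ to $\mu_{n,tk}(g)$ — and here one must check $tk\leq 2^{n-1}$, which is exactly guaranteed by the hypothesis $k\leq\left[\frac{2^{n-1}}{t}\right]$. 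Chaining these:
\[
\psi(\mu_{n,k}(g)) = h\bigl(\xi_t\bigl(h^{-1}(\mu_{n,k}(g))\bigr)\bigr) = h\bigl(\xi_t(\mu_{n,k}(f))\bigr) = h(\mu_{n,tk}(f)) = \mu_{n,tk}(g),
\]
which is precisely the claimed identity.

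The main obstacle I anticipate is bookkeeping with the index ranges rather than any deep difficulty: I must make sure that the constraint $k\leq\left[\frac{2^{n-1}}{t}\right]$ propagates correctly through each of the three maps in the composition, in particular that it is exactly the condition needed for Remark~\ref{rem:2.14} to apply and simultaneously the condition that keeps $tk$ within $[0, 2^{n-1}]$ so that $\mu_{n,tk}(g)$ and $\mu_{n,tk}(f)$ are defined and Lemma~\ref{lema:2.4} applies. A secondary subtlety is dealing cleanly with the non-$\xi_t$ solutions from Proposition~\ref{prop:1}: I would argue that a constant self-semiconjugation trivially satisfies the conclusion for suitable $t$ (e.g. any $t$ with $[2^{n-1}/t]=0$ for the relevant small $n$, or by observing the conclusion is then about $\mu_{n,0}$ only), or, if the intended statement tacitly assumes $\psi$ is surjective and non-constant, simply note that surjectivity forces $\xi$ to be surjective, hence $\xi = \xi_t$, disposing of the constant cases entirely. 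Either way this is a remark, not a real step.
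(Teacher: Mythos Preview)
Your proposal is correct and follows essentially the same route as the paper: invoke Lemma~\ref{lema:2.1} to write $\psi = h\circ\xi_t\circ h^{-1}$, use Lemma~\ref{lema:2.4} (equivalently~\eqref{eq:2.8}) to translate $\mu_{n,k}(g)\leftrightarrow\mu_{n,k}(f)$ through $h$, and apply Remark~\ref{rem:2.14} in the middle, exactly the chain you wrote out. The paper simply asserts $\xi=\xi_t$ without discussing the constant solutions (implicitly relying on surjectivity of $\psi$), so your digression on the constant cases is extra caution rather than a point of divergence.
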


\begin{proof} Let $\psi$ be a self-semiconjugation of $g$, and $\xi$ be the
self-semiconjugation of the tent map, whose existence follows from
Lemma~\ref{lema:2.1}. By Proposition~\ref{prop:1} the map $\xi$ is
$\xi = \xi_t$ of the form~\eqref{eq:1.3} for some $t\geq 1$ and
\begin{equation}\label{eq:2.7} \psi = h\circ \xi_t \circ
h^{-1}.\end{equation}

Write $\psi_t$ for the function $\psi$, which is defined
by~\eqref{eq:2.7}.

By~\cite[Lemma~3.6]{Plakh-Arx-Derivat-6} we have that
\begin{equation}\label{eq:2.8} h(\mu_{n,k}(f)) = \mu_{n,k}(g)
\end{equation} for all $n\geq 1$ and $k,\, 0\leq k\leq 2^{n-1}$, where
$\mu_{n,k}$ are defined in~\eqref{eq:2.5}.

For all $n\geq 1$ and $k\leq \left[ \frac{2^{n-1}}{t}\right]$ it
follows from Remark~\ref{rem:2.14} that

$$ \psi_t(\mu_{n,k}(g)) \stackrel{\text{\eqref{eq:2.7}}}{=} %
(h\circ \xi_t)( h^{-1}(\mu_{n,k}(g) ))
\stackrel{\text{\eqref{eq:2.8}}}{=}$$
$$
=(h\circ \xi_t)( \mu_{n,k}(f) ) =h( \xi_t(\mu_{n,k}(f) ))
\stackrel{\text{Rem.~\ref{rem:2.14}}}{=}
$$$$
=h( \mu_{n,kt}(f) ) \stackrel{\text{\eqref{eq:2.8}}}{=}
\mu_{n,kt}(g),
$$ and lemma follows.\end{proof}

\subsection{Conjugation of carcass maps via piecewise linear conjugacy}

\begin{lemma}\label{lema:2.16}
Let $g_1$ and $g_2$ be unimodal maps, which are topologically
conjugated to the tent map. If a continuous solution $h$
of~\eqref{eq:2.6} is linear on some interval, then $h$ is
piecewise linear in the entire $[0, 1]$.
\end{lemma}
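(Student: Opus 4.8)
\textbf{Proof plan for Lemma~\ref{lema:2.16}.}

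The plan is to exploit the self-similar structure of the partition $\{I_{n,k}(g_1)\}$ together with Lemma~\ref{lema:2.4}, which forces $h(\mu_{n,k}(g_1)) = \mu_{n,k}(g_2)$ for every $n$ and $k$. First I would observe that the family of intervals $\{\overline{I_{n,k}(g_1)} : n\geq 1,\ 0\leq k<2^{n-1}\}$ forms a nested system whose meshes tend to $0$ (this uses that the complete preimage of $0$ under $g_1$ is dense, which holds since $g_1$ is topologically conjugated to the tent map, via Theorem~\ref{th:1}); consequently every point of $[0,1]$ is an intersection of such a nested sequence, and every subinterval of $[0,1]$ contains some $\overline{I_{n,k}(g_1)}$. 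So the hypothesis that $h$ is linear on some interval $J$ gives us at least one pair $(n,k)$ with $\overline{I_{n,k}(g_1)}\subseteq J$, hence $h$ is linear on $\overline{I_{n,k}(g_1)}$.

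Next I would propagate linearity from one such interval to all of them. The key structural input is the relation $\mu_{n,k}(g) = \mu_{n+1,2k}(g)$ (Remark~\ref{rem:2.3}) together with the fact that the ``shape'' of the refinement of $I_{n,k}$ into $I_{n+1,2k}\cup I_{n+1,2k+1}$ is governed, on the tent-map side, by the dyadic combinatorics of Remark~\ref{rem:2.2}: for the tent map $f$ one has $\mu_{n,k}(f)=k/2^{n-1}$, so each $I_{n,k}(f)$ has equal length and $h$ restricted to the dyadic grid is completely pinned down. The claim to establish is: if $h$ is linear on $\overline{I_{n,k}(g_1)}$, then $h$ is linear on both halves $\overline{I_{n+1,2k}(g_1)}$ and $\overline{I_{n+1,2k+1}(g_1)}$, and also on the parent $\overline{I_{n-1,\lfloor k/2\rfloor}(g_1)}$ provided the sibling is handled too. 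Linearity downward is immediate since a restriction of a linear map is linear; the real content is linearity upward, and for that I would use the functional equation $h\circ g_1 = g_2\circ h$: the map $g_1$ sends $I_{n+1,k}(g_1)$ linearly (on each monotonicity branch) onto $I_{n,k'}(g_1)$ for an appropriate $k'$ — more precisely, since the kinks of a general carcass map need not lie in $g_1^{-\infty}(0)$, I must instead argue that $g_1$ is monotone, hence injective, on each $I_{n,k}(g_1)$ and maps it onto a union of lower-level intervals, and combine this with the analogous statement for $g_2$ and the already-established identities on the $\mu$'s. Iterating, linearity on one $\overline{I_{n,k}(g_1)}$ spreads to every $\overline{I_{m,j}(g_1)}$; since these cover $[0,1]$ and the breakpoints $\mu_{m,j}$ are finitely many at each level, on any fixed interval only finitely many of the ``atoms'' $\overline{I_{m,j}}$ are needed to cover it, giving piecewise linearity of $h$ on $[0,1]$.

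The main obstacle I anticipate is making the upward-propagation step rigorous without assuming the carcass maps are firm: a priori $h$ could be linear on $\overline{I_{n+1,2k}}$ and on $\overline{I_{n+1,2k+1}}$ with two different slopes that happen not to match at $\mu_{n+1,2k+1}$, so I cannot simply glue. The resolution should come from pushing the linearity back through $g_1$ and $g_2$: if $h$ is linear on $\overline{I_{n,k}(g_1)}$, pick a level-$(n+1)$ interval $\overline{I_{n+1,j}(g_1)}$ that $g_1$ maps onto $\overline{I_{n,k}(g_1)}$ (such $j$ exists by the preimage combinatorics), and then $h|_{\overline{I_{n+1,j}}} = g_2^{-1}|_{\text{branch}}\circ h|_{\overline{I_{n,k}}}\circ g_1|_{\overline{I_{n+1,j}}}$ is a composition with the outer map $g_2$ linear on the relevant branch of $g_2$ — but wait, $g_2$ linear on a branch is exactly what we do not know. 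Hence the honest argument must run the other way: conjugating via $h$, it suffices to prove the statement with $g_2$ replaced by the tent map $f$ (i.e. for solutions $h$ of $h\circ g_1 = f\circ h$), and there the outer map is genuinely piecewise linear with known slopes, so composing a linear piece with $f$ and with a monotone branch of $g_1$ yields, after inverting $f$'s branch, linearity on the parent interval — this is where the dyadic rigidity of $f$ does the real work. Reducing the general case to this one is routine given Lemma~\ref{lema:2.1} and the uniqueness of the conjugacy.
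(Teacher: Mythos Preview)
Your plan contains a real gap: you propagate linearity in the wrong direction. When you write $h|_{\overline{I_{n+1,j}}} = g_2^{-1}\circ h|_{\overline{I_{n,k}}}\circ g_1|_{\overline{I_{n+1,j}}}$ you are pulling $h$ back to a \emph{preimage} of the interval where it is already linear; this produces linearity on ever smaller intervals (a tree of descendants of $I_{n,k}$), and that tree never covers $[0,1]$. Your closing claim that ``finitely many atoms $\overline{I_{m,j}}$ cover any fixed interval'' is therefore unsupported: the atoms you actually control are only those lying over the single starting interval. The attempted reduction to $g_2=f$ is also circular: replacing $h$ by $h_2^{-1}\circ h$ (with $h_2$ the $f$--$g_2$ conjugacy) destroys the hypothesis ``linear on an interval'' unless $h_2$ is already piecewise linear, which is precisely the unknown.

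The paper's argument goes the other way, and is much shorter. From $h\circ g_1 = g_2\circ h$ one gets, for any interval $J$ on which $g_1$ is monotone, $h|_{g_1(J)} = g_2\circ h|_J\circ (g_1|_J)^{-1}$. If $J=[\mu_{n,k^-}(g_1),\mu_{n,k^+}(g_1)]$ lies inside the given linearity interval, then $g_1(J)$ is an interval with endpoints in $g_1^{-(n-1)}(0)$, and $h$ on $g_1(J)$ is a composition of $h|_J$ with pieces of $g_1$ and $g_2$; since $g_1,g_2$ are carcass maps (piecewise linear --- this is the standing hypothesis of the subsection, despite the word ``unimodal'' in the statement), the composition is piecewise linear. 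Iterating this push-forward decreases the level by one at each step, so after at most $n-1$ steps one reaches $[\mu_{1,0},\mu_{1,1}]=[0,1]$. Your worry that ``$g_2$ linear on a branch is exactly what we do not know'' is misplaced: it is part of the hypothesis, and using it directly removes the need for any reduction to the tent map.
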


\begin{proof}
Since $g_1$ and $g_2$ are topologically conjugated to the tent
map, then, by Theorem~\ref{th:1}, there are $n$ and $k_n^-,\,
k_n^+\in \{0,\ldots, 2^{n-1}\}$ such that $h$ is linear linear on
$[\mu_{n,k_n^-}(g_1), \mu_{n,k_n^+}(g_1)]$.

The formula~\eqref{eq:2.6} defines $h$ on the interval $g_1\circ
[\mu_{n,k_n^-}, \mu_{n,k_n^+}]$. Since $g_1\circ g_1^{-n}(0) =
g^{-n+1}(0)$, then there are $k_{n-1}^-,\, k_{n-1}^+\in
\{0,\ldots, 2^{n-2}\}$ such that $h$ is piecewise linear linear on
$[\mu_{n-1,k_{n-1}^-}(g_1), \mu_{n-1,k_{n-1}^+}(g_1)] = g_1\circ
[\mu_{n,k_n^-}, \mu_{n,k_n^+}]$.

In this manner lemma follows by induction on $n$, sice $[0,\, 1] =
[\mu_{1,0}(g_1), \mu_{1,1}(g_1)]$.
\end{proof}

\begin{lemma}\label{lema:2.17}
Let $g$ be a carcass map, which is topologically conjugated to the
tent map $f$. The following conditions are equivalent.

1. The conjugacy of $f$ and $g$ is piecewise linear.

2. There is $a\in (0,\, 1)$ and $r, t>0$ such that for every
$n\geq 1$ and $k,\, 0\leq k\leq 2^{n-1}$ the equality $$
\mu_{n,k}(g) = \frac{tk}{2^{n-1}}
$$ holds, whenever $\mu_{n,k}(g)< r$.
\end{lemma}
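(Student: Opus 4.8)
The plan is to prove the equivalence in Lemma~\ref{lema:2.17} by exploiting the characterization of the conjugacy via its action on the complete pre-images of $0$, as recorded in Lemma~\ref{lema:2.4} (specifically~\eqref{eq:2.8}), together with Lemma~\ref{lema:2.16}. For the implication $1\Rightarrow 2$, assume the conjugacy $h$ with $h\circ f = g\circ h$ is piecewise linear. Near $0$ the map $h$ is then linear on some interval $[0,r_0]$, say with slope $t$, so $h(x)=tx$ for $x\in[0,r_0]$. Since $h$ maps $f^{-n}(0)$ to $g^{-n}(0)$ preserving order (Lemma~\ref{lema:2.4}), and $f^{-n}(0)=\{k/2^{n-1}\}$ by Remark~\ref{rem:2.2}, we get $\mu_{n,k}(g)=h(k/2^{n-1})=tk/2^{n-1}$ for every $k$ with $k/2^{n-1}\le r_0$. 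Setting $r=tr_0$ (or simply $r=\min\{r_0,\,\text{first kink of }h\}$ scaled appropriately) gives condition~2 with this $t$ and any convenient $r>0$; the role of $a$ is just to name such a threshold, so one can take $a=r_0$.

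For the converse $2\Rightarrow 1$, suppose there are $a,r,t>0$ with $\mu_{n,k}(g)=tk/2^{n-1}$ whenever $\mu_{n,k}(g)<r$. The idea is to show the conjugacy $h$ is linear on an initial interval and then invoke Lemma~\ref{lema:2.16} to conclude it is piecewise linear everywhere. From~\eqref{eq:2.8} we have $h(k/2^{n-1})=\mu_{n,k}(g)=tk/2^{n-1}$ for all dyadic points $k/2^{n-1}$ that land (under $g$'s pre-image labelling) below $r$; since the dyadic rationals $k/2^{n-1}$, $n\ge1$, are dense in $[0,1]$ and $h$ is a continuous increasing homeomorphism, the set of $k/2^{n-1}$ with $\mu_{n,k}(g)<r$ is exactly $h^{-1}([0,r))\cap\bigcup_n f^{-n}(0)$, which is dense in the interval $h^{-1}([0,r))=[0,h^{-1}(r))$. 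On that dense set $h$ agrees with the linear map $x\mapsto tx$, so by continuity $h(x)=tx$ on all of $[0,h^{-1}(r))$. Thus $h$ is linear on a genuine interval, and Lemma~\ref{lema:2.16} (applied with $g_1=f$, $g_2=g$) yields that $h$ is piecewise linear on $[0,1]$.

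The main technical point — the step I expect to require the most care — is the bookkeeping linking the hypothesis ``$\mu_{n,k}(g)<r$'' to a statement about an honest subinterval of the domain of $h$, i.e. verifying that $\{\,k/2^{n-1}:\ n\ge1,\ \mu_{n,k}(g)<r\,\}$ is dense in a nondegenerate interval containing $0$. This uses that $h$ is an increasing homeomorphism fixing $0$ (so $h^{-1}(r)>0$) together with the refinement compatibility $\mu_{n,k}(g)=\mu_{n+1,2k}(g)$ from Remark~\ref{rem:2.3}, which guarantees the dyadic labels are nested consistently and their images exhaust $g^{-\infty}(0)\cap[0,r)$. Once that density is in hand, both the continuity argument and the appeal to Lemma~\ref{lema:2.16} are routine. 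I would also remark that the parameter $a$ in condition~2 is essentially redundant (any $a\in(0,1)$ works once $r$ is small enough), so no separate argument about $a$ is needed beyond noting its existence; the substantive content is entirely carried by the pair $(r,t)$.
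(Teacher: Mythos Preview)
Your proposal is correct and follows essentially the same route as the paper's proof: both directions amount to the observation that condition~2 is exactly the statement that the conjugacy $h$ (which, by Lemma~\ref{lema:2.4} and Remark~\ref{rem:2.2}, sends $k/2^{n-1}$ to $\mu_{n,k}(g)$) is linear on a neighborhood of~$0$, and Lemma~\ref{lema:2.16} then upgrades local linearity to global piecewise linearity. The paper's own proof is two sentences invoking exactly these ingredients; you have simply spelled out the density-and-continuity step that the paper leaves implicit.
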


\begin{proof}
It follows from Lemma~\ref{lema:2.4} and Remark~\ref{rem:2.2} that
condition 1. is equivalent to that the conjugacy of $f$ and $g$ is
linear in some neighborhood of $0$.

Now lemma follows from Lemma~\ref{lema:2.16}.
\end{proof}

\begin{lemma}\label{lema:2.18}
Let a carcass map $g$ be linear on some interval $(a,\, b)$. Then
$$
\# (g\circ (a,\, b)) = g'(a)\cdot \# (a,\, b).
$$
\end{lemma}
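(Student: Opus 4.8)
The plan is to observe that $g$ is linear on $(a,b)$, and a linear map's image is an interval whose length is the absolute value of the slope times the length of the domain. First I would note that since $g$ is a carcass map (in particular continuous), and linear on $(a,b)$, the restriction $g|_{(a,b)}$ has a constant derivative, which is exactly $g'(a)$ interpreted as the one-sided derivative at $a$ from the right (this is the slope of the line segment forming the graph over $(a,b)$). Then $g((a,b))$ is the open interval with endpoints $g(a)$ and $g(b)$, and $g(b) - g(a) = g'(a)\cdot(b-a)$ by linearity. Hence $\#(g\circ(a,b)) = |g(b)-g(a)| = |g'(a)|\cdot(b-a)$.

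Strictly, $\#$ as defined in Notation 2.5 is $b-a$ for $I=(a,b)$, i.e.\ it already presumes the left endpoint is smaller; so one should phrase the conclusion with the understanding that $g'(a)$ carries the sign needed to make $g'(a)\cdot\#(a,b)$ positive, or equivalently the identity is read as an equality of signed quantities in the natural orientation. The cleanest exposition: let $m$ be the slope of $g$ on $(a,b)$; then $g(x) = g(a) + m(x-a)$ for $x\in[a,b]$ by continuity and linearity, so $g\circ(a,b)$ is the open interval between $g(a)$ and $g(a)+m(b-a)$, whose length is $|m|(b-a) = |g'(a)|\cdot\#(a,b)$, and since $m = g'(a)$ this is the claim.

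The only genuine subtlety — and the one "obstacle" worth a sentence — is making sure $g'(a)$ is well-defined and equals the slope on $(a,b)$: the point $a$ may itself be a kink of $g$, so $g'(a)$ must be read as the right derivative $g'(a+)$, which by linearity on $(a,b)$ coincides with the slope $m$; with that reading the formula is immediate. I expect no real difficulty beyond this bookkeeping, since everything reduces to the elementary fact that an affine map multiplies lengths by the absolute value of its slope.
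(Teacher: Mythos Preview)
Your proposal is correct and matches the paper's approach: the paper's own proof is the single sentence ``This fact is trivial,'' and your write-up simply spells out the elementary reason (a linear map scales interval lengths by its slope, with $g'(a)$ read as the right derivative). There is nothing to add.
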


\begin{proof}
This fact is trivial.
\end{proof}

\section{Self-semiconjugations}

\subsection{Carcass maps}

\begin{lemma}\label{lema:3.1}
Let $g$ be a carcass map and $a\in (0, 1)$ be the first positive
kink of $g$. If $\psi\, '(0)>g'(0)$ for a piecewise linear
self-semiconjugation $\psi$ of $g$, then $\frac{a\cdot
g'(0)}{\psi\, '(0)}$ is the first positive kink of $\psi$.
\end{lemma}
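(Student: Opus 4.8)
The plan is to use the functional equation $\psi\circ g=g\circ\psi$ together with the fact that, near $0$, both $g$ and $\psi$ are linear (being piecewise linear with a first positive kink), and to track where kinks are forced to appear. First I would observe that $g$ is linear on $[0,a]$ with slope $g'(0)=:s$, so $g(x)=sx$ for $x\in[0,a]$; and $\psi$ is linear on some initial interval $[0,c]$ with slope $\psi'(0)=:p$, so $\psi(x)=px$ there. The hypothesis $p>s$ will be used to guarantee that $\psi$ maps a genuine initial interval into the linear part of $g$, and conversely that $g$ maps an initial interval into the linear part of $\psi$. The candidate value for the first kink of $\psi$ is $c=\dfrac{a\,g'(0)}{\psi'(0)}=\dfrac{as}{p}$; note $c<a$ precisely because $p>s$.

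Next I would show $\psi$ has no kink on $(0,c)$ and does have one at $c$. Consider $x$ small. On one hand $g(\psi(x))=g(px)=s\,px=spx$ as long as $px\le a$, i.e. $x\le a/p$. On the other hand $\psi(g(x))=\psi(sx)$, and $sx\le sa/p<a$, so $sx$ lies well inside $[0,a]$; if moreover $sx\le c$ then $\psi(sx)=p\,sx=psx$, matching. The two computations agree automatically on the overlap, so no contradiction yet — the point is to locate the first place the linear formula for $\psi$ must break. I would argue as follows: suppose $\psi$ were linear on $[0,c']$ for some $c'>c$. Then for $x\in(a,\,c'p/s)$ — wait, more carefully — take $x$ slightly larger than $c$; then $\psi(x)$ is still given (by assumed linearity) by $px$, and $px>pc=as$, so $px$ exceeds... hmm, $as<a$, so this alone is not past the kink $a$ of $g$. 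Let me instead run the equation the other way: for $x$ with $sx$ slightly above $c$, the right side $\psi(g(x))=\psi(sx)$ uses $\psi$ just past its first kink, while the left side $g(\psi(x))=g(px)$; since $x<a/p$ gives $px<a$, the left side equals $spx$, which is linear in $x$. Hence $\psi(sx)=spx=p(sx)$, forcing $\psi$ to be linear at the point $sx$ — but $sx$ ranges over an interval straddling $c$. This shows the first kink of $\psi$ cannot be at $c$ unless it is "created" by a kink of $g$ being hit; and the first positive kink of $g$ is $a$, whose preimage under the linear map $x\mapsto px$ is $a/p$, while $g$ is linear through $0$ so contributes its kink $a$ to $\psi\circ g$ at the point $x=a/s$. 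Comparing $a/p$ (kink of $g\circ\psi$, i.e. of $\psi$ after composing) versus the transported data, the first kink of $\psi$ itself sits at $c=as/p$: a kink of $\psi$ at position $y$ forces, via $g\circ\psi$, a kink of the composite at $y$ (since $g$ is linear there), which via $\psi\circ g$ must be matched by a kink of $\psi\circ g$ at $y$, coming either from a kink of $g$ at $y<a$ (impossible for $y<a$) or from a kink of $\psi$ at $g(y)=sy$; iterating $y\mapsto sy$ downward and using that $\psi$ has some first positive kink, the minimal one $y^\ast$ must satisfy that $sy^\ast$ is not a kink of $\psi$, which by minimality of $y^\ast$ means $sy^\ast< y^\ast$ is fine but then the kink at $y^\ast$ of $g\circ\psi$ must come from the kink $a$ of $g$, i.e. $\psi(y^\ast)=a$, i.e. $py^\ast=a\cdot(\text{slope})$... and since $\psi$ is linear up to $y^\ast$, $\psi(y^\ast)=py^\ast$, giving $py^\ast=a$? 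That would give $y^\ast=a/p$, not $as/p$.

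I would resolve this discrepancy — which is exactly the main obstacle — by being careful about which composite's kink we read off. The correct bookkeeping: a kink of $\psi$ at $y$ produces a kink of $\psi\circ g$ at every $x$ with $g(x)=y$ that is not itself a kink of $g$; near $0$, $g(x)=sx$, so $x=y/s$. Matching with $g\circ\psi$, whose kinks near $0$ are at $x$ with $\psi(x)$ a kink of $g$ (so $\psi(x)=a$, $x=a/p$ while $\psi$ is linear) together with $x$ a kink of $\psi$ itself (since $g$ is then linear at $\psi(x)<a$, $g$ does not smooth it out). So the kink set of $\psi\circ g$ near $0$ is $\{y^\ast/s\}\cup\dots$ and that of $g\circ\psi$ near $0$ is $\{a/p\}\cup\{y^\ast\}\cup\dots$; equating the \emph{smallest} elements, and using $p>s$ so that $a/p<a$ and $y^\ast/s>y^\ast$, one deduces $y^\ast/s$ cannot be the smallest of the left set unless it equals $\min\{a/p,y^\ast\}$. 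The hard part is showing $y^\ast<a/p$ is impossible and $y^\ast=a/p$ is impossible, leaving $a/p$ as a kink of $g\circ\psi$ that must be matched, hence $y^\ast/s=a/p$, i.e. $y^\ast=as/p=\dfrac{a\,g'(0)}{\psi'(0)}$ — and simultaneously checking this $y^\ast$ is a genuine kink (the slopes of $\psi$ on the two sides differ, which follows because the slope of $g$ jumps at $a$ and the equation transfers that jump). I expect this slope-jump verification and the exclusion of the "too-small $y^\ast$" case to be where all the real work is; everything else is the linear algebra of composing affine pieces, which I would only sketch.
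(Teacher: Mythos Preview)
Your kink-matching idea is sound and genuinely different from the paper's argument. The paper does not compare kink sets at all: it bootstraps linearity of $\psi$ directly. If $\psi(x)=\psi'(0)\,x$ on $(0,\varepsilon)$ with $\psi'(0)\,\varepsilon<a$, then on this interval $\psi(g(x))=g(\psi(x))$ reads $\psi(g'(0)\,x)=g'(0)\,\psi'(0)\,x$, so $\psi$ is linear with the same slope on the larger interval $(0,g'(0)\,\varepsilon)$; iterating (which uses $g'(0)>1$) extends linearity up to $\bigl(0,\,a\,g'(0)/\psi'(0)\bigr)$. Then, for $x$ with $\psi(x)=\psi'(0)\,x\in(a,a+\delta)$, the equation gives $\psi(g'(0)\,x)=g'(a{+})\bigl(\psi'(0)\,x-a\bigr)+g(a)$, so just to the right of $a\,g'(0)/\psi'(0)$ the slope of $\psi$ is $g'(a{+})\,\psi'(0)/g'(0)\ne\psi'(0)$, since $a$ is a genuine kink of $g$. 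That is the whole proof.

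Your route also works, but the write-up has a directional confusion that makes you flag as ``hard'' a step that is in fact a two-line case split. You write ``iterating $y\mapsto sy$ downward'' and ``$sy^\ast<y^\ast$'', but $s=g'(0)>1$, so $sy>y$; the map that goes \emph{down} is the preimage $y\mapsto y/s$, which is what produces kinks of $\psi\circ g$ (you do find this in your second pass). Once this is straight the bookkeeping is clean: the first positive kink of $\psi\circ g$ is $\min(y^\ast/s,\,a)$ and that of $g\circ\psi$ is $\min(y^\ast,\,a/p)$. If $y^\ast\le a/p$ then the second minimum equals $y^\ast$ while the first equals $y^\ast/s<y^\ast$, contradiction; hence $y^\ast>a/p$, the second minimum equals $a/p<a$, and therefore $y^\ast/s=a/p$, i.e.\ $y^\ast=as/p=a\,g'(0)/\psi'(0)$. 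The slope-jump verifications you worry about are one line each (left/right slopes $ps$ versus $\psi'(y^\ast{+})\,s$ at $y^\ast/s$, and $sp$ versus $g'(a{+})\,p$ at $a/p$), so there is no hidden work there. The paper's bootstrap is shorter and avoids the need to reason about kinks of composites; your approach is more conceptual but requires the small case analysis you left undone.
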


\begin{proof}
It follows from Lemma~\ref{lema:2.15} that $\psi\, '(0)>1$.

Since $\psi$ is piecewise linear, then there is $\varepsilon>0$
such that $\psi(x)=\psi\, '(0)\cdot x$ for all $x\in (0,
\varepsilon)$.

Suppose that $\varepsilon>0$ is such that $\psi\, '(0)\cdot
\varepsilon <a$. Notice that in this case $x<a$ for all $x\in (0,
\varepsilon)$, because $\psi\, '(0)>1$. Thus, for all $x \in (0,
\varepsilon)$ we have that $\psi\, '(x)=\psi\, '(0)\cdot x$,
$(g\circ \psi)(x) = g'(0)\cdot \psi\, '(0)\cdot x$ and
$g(x)=g'(0)\cdot x$. Now, by~\eqref{eq:1.4}, $$ \psi(x) = \psi\,
'(0)\cdot x
$$ for all $x\in g\circ (0,
\varepsilon).$

Thus, for every $x\in g\circ \left( 0, \frac{a}{\psi\,
'(0)}\right)$ we have that $\psi(x) = \psi\, '(0)\cdot x$. Notice
that $g\circ \left( 0, \frac{a}{\psi\, '(0)}\right) = \left( 0,
\frac{a\cdot g'(0)}{\psi\, '(0)}\right)$. Remark that
$\frac{a\cdot g'(0)}{\psi\, '(0)}< a$.

Take an arbitrary $\delta,\, 0<\delta <a\cdot (g'(0)-1)$ is such
that $g$ is linear on $(a,\, a+\delta)$. Then for every $x\in
\left( \frac{a+\delta}{\psi\, '(0)}, \frac{a\cdot g'(0)}{\psi\,
'(0)}\right)$ we have that $\psi(x)=\psi\, '(0)\cdot x$, because
$x<\frac{a\cdot g'(0)}{\psi\, '(0)}$; also $g(x) = g'(0)\cdot x$,
because $x<a$, and $(g\circ \psi)(x) = g'(a+)\cdot (\psi\,
'(0)\cdot x -a) + g(a)$.

Now, it follows from~\eqref{eq:1.4} that
\begin{equation}\label{eq:3.1} \psi(g'(0)\cdot x) =g'(a+)\cdot
(\psi\, '(0)\cdot x -a) + g(a).
\end{equation} Denote %
$u = g'(0)\cdot x$ and remark that $u\in \left(
\frac{(a+\delta)\cdot g'(0)}{\psi\, '(0)}, \frac{a\cdot
(g'(0))^2}{\psi\, '(0)}\right)$, whenever $x\in \left(
\frac{a+\delta}{\psi\, '(0)}, \frac{a\cdot g'(0)}{\psi\,
'(0)}\right)$. Rewrite~\eqref{eq:3.1} as $$ \psi(u) = g'(a+)\cdot
\left(\frac{\psi\, '(0)\cdot u}{g'(0)} -a\right) + g(a).
$$
Clearly, if $g'(0)\neq g'(a+)$, then there is $u\in \left(
\frac{(a+\delta)\cdot g'(0)}{\psi\, '(0)}, \frac{a\cdot
(g'(0))^2}{\psi\, '(0)}\right)$ such that $\psi(u) \neq \psi\,
'(0)\cdot u$.
Remark that %
if $\delta \approx 0$, then $\frac{(a+\delta)\cdot g'(0)}{\psi\,
'(0)} \approx \frac{a\cdot g'(0)}{\psi\, '(0)}$.
\end{proof}

\begin{lemma}\label{lema:3.2}
Suppose that for any $t\geq 1$ the map $\psi_t$, defined
by~\eqref{eq:2.7}, is piecewise linear. Then there is $w\in
\mathbb{R}$ such that for every $n\geq 1$ and $k$, the equality
$$ \mu_{n,k}(g) =
\frac{w\cdot k}{2^{n-1}},
$$ holds whenever $\mu_{n,k}(g)\leq g(a)$, where $a\in (0, 1)$ is
the first positive kink of $g$.
\end{lemma}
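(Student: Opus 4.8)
The plan is to exploit Lemma~\ref{lema:2.15} together with the piecewise linearity of every $\psi_t$, and to compute the image of a small interval near $0$ in two ways. Fix $t\geq 1$. By Lemma~\ref{lema:2.15} the map $\psi_t$ sends $\mu_{n,k}(g)$ to $\mu_{n,kt}(g)$ for all $k\leq[2^{n-1}/t]$; in particular $\psi_t(0)=0$, and since $\psi_t$ is piecewise linear and (by Lemma~\ref{lema:2.15}, applied with $n$ large) not constant near $0$, there is $\varepsilon_t>0$ with $\psi_t(x)=\psi_t'(0)\cdot x$ on $[0,\varepsilon_t]$. The first observation is that $\psi_t'(0)$ must itself be a point of the orbit structure: comparing $\psi_t(\mu_{n,1}(g))=\mu_{n,t}(g)$ with the linear formula $\psi_t'(0)\cdot\mu_{n,1}(g)$ for $n$ large enough that $\mu_{n,1}(g)<\varepsilon_t$, I get $\mu_{n,t}(g)=\psi_t'(0)\cdot\mu_{n,1}(g)$. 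As $n\to\infty$ this pins down $\psi_t'(0)=\lim_n \mu_{n,t}(g)/\mu_{n,1}(g)$, a well-defined positive constant depending on $t$; call it $c_t$, and note $c_1=1$.

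Next I would run Lemma~\ref{lema:3.1} (or rather its argument) to control where linearity of $\psi_t$ can break. Let $a$ be the first positive kink of $g$; then $g$ is linear on $(0,a)$ with $g(x)=g'(0)x$ there, and on a one-sided neighborhood $(a,a+\delta)$ with a (generally different) slope $g'(a+)$. The functional equation $\psi_t\circ g=g\circ\psi_t$ propagates the linear piece of $\psi_t$ forward: for $x\in(0,\varepsilon_t)$ one has $(g\circ\psi_t)(x)=g'(0)c_t x$ (provided $c_t x<a$), hence $\psi_t$ is linear with slope $c_t$ on $g\circ(0,\varepsilon_t)=(0,g'(0)\varepsilon_t)$ — so linearity spreads until the point $c_t x$ reaches $a$, i.e.\ $\psi_t$ is linear with slope $c_t$ on the whole interval $\bigl(0,\;g'(0)\cdot \tfrac{a}{c_t}\bigr)=\bigl(0,\;\tfrac{a\,g'(0)}{c_t}\bigr)$. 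Re-expressing: $\psi_t$ is linear on $(0, r_t)$ with $r_t=\min\{a, a g'(0)/c_t\}$, and one can push this once more: feeding $(a,a+\delta)$ through the equation, $\psi_t$ is linear on a neighborhood of $g(a)$ as well — in fact on $(0,g(a))$, because $g((0,a/c_t))=(0,g(a)\cdot\text{(something)})$... the key point I will extract is that $\psi_t$ is linear with slope $c_t$ on an interval whose right endpoint is at least $g(a)$, for a suitable choice of $t$ (namely $t$ small, e.g.\ $t=2$, where $c_2$ is comparable to $g'(0)$).

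With that in hand the conclusion is routine. Having $\psi_t$ linear with slope $c_t$ on $(0,g(a))$ and the nodal identity $\psi_t(\mu_{n,k}(g))=\mu_{n,kt}(g)$ whenever $\mu_{n,k}(g)<g(a)$, I get $\mu_{n,kt}(g)=c_t\,\mu_{n,k}(g)$ for all such $n,k$. Taking $t=2$ and using Remark~\ref{rem:2.3} ($\mu_{n,2k}(g)=\mu_{n-1,k}(g)$) turns this into a self-similarity relation $\mu_{n-1,k}(g)=c_2\,\mu_{n,k}(g)$ on the relevant range; iterating $n\mapsto n+1$ shows $2^{n-1}\mu_{n,k}(g)$ is eventually independent of $n$ for each fixed $k$ with $\mu_{n,k}(g)\leq g(a)$, and letting $w$ be this common value (for $k=1$, say, $w=\lim_n 2^{n-1}\mu_{n,1}(g)$, which forces $c_2=2$ and hence the scaling is exactly by powers of $2$), the desired formula $\mu_{n,k}(g)=wk/2^{n-1}$ follows for all $n,k$ with $\mu_{n,k}(g)\leq g(a)$ by induction on $n$, using additivity of $g$ on $(0,a)$ to handle intermediate $k$.

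The main obstacle I anticipate is the second paragraph: carefully bounding the interval on which $\psi_t$ is forced to be linear, and in particular arranging that $t$ can be chosen so this interval reaches all the way to $g(a)$ while keeping $c_t x<a$ throughout the propagation. This is exactly the delicate endpoint-chasing in the proof of Lemma~\ref{lema:3.1}, and the step where one must be sure that $g'(0)\neq g'(a+)$ does \emph{not} get invoked — here, by hypothesis, \emph{all} $\psi_t$ are piecewise linear, so no contradiction arises and instead the linearity simply propagates; the bookkeeping of which sub-interval of $(0,g(a))$ is covered at each stage is where care is needed.
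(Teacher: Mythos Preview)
Your setup is fine: you correctly extract from Lemma~\ref{lema:3.1} that each $\psi_t$ is linear with slope $c_t:=\psi_t'(0)$ on an initial interval, and combining this with Lemma~\ref{lema:2.15} you reach the relation $\mu_{n,k}=c_k\,\mu_{n,1}$ whenever $\mu_{n,k}$ is small enough. Up to notation this is exactly equation~\eqref{eq:3.2} in the paper, with $c_k=\tau_k$.

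The gap is in what follows. From that point on you specialise to $t=2$ and try to finish using only the relation $\mu_{n-1,k}=c_2\,\mu_{n,k}$. But $\psi_2=h\circ\xi_2\circ h^{-1}=h\circ f\circ h^{-1}=g$, so $\psi_2$ is piecewise linear for \emph{every} carcass map $g$, and $c_2=g'(0)$; the relation $\mu_{n-1,k}=g'(0)\,\mu_{n,k}$ is nothing but the linearity of $g$ on $(0,a)$, which you already knew. In particular it does not force $c_2=2$, nor does it say anything about the spacing of the $\mu_{n,k}$ in $k$. More generally, the multiplicative structure $c_{kt}=c_k c_t$ you can squeeze out of all the $\psi_t$ together still does not determine the $c_k$ (any completely multiplicative increasing sequence would do).

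What is missing is the reflection step: the paper pulls the interval $(0,\mu_{n,k})$ back through the \emph{decreasing} branch $g_r$ (and then several times through $g_l$) to obtain a second, additive relation
\[
\tau_k \;=\; q\cdot\bigl(\tau_{2^n}-\tau_{2^n-k}\bigr),
\]
see~\eqref{eq:3.3}--\eqref{eq:3.4}. Differencing in $k$ gives $\#I_{s,k}=q\cdot\#I_{s,2^n-k-1}$, and swapping $k\leftrightarrow 2^n-k-1$ forces $q=1$; hence all the $I_{s,k}$ near $0$ have equal length, which is exactly the statement $\mu_{n,k}=wk/2^{n-1}$. Without invoking the right branch you never get an additive (as opposed to multiplicative) constraint on the $\tau_k$, and the argument cannot close.
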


\begin{proof}
For any $n\geq 1$ and $k,\, 0\leq k\leq 2^{n-1}$ we will write
$\mu_{n,k}$ instead of $\mu_{n,k}(g)$.

Denote $\tau_t$ the tangent of $\psi_t$ at $0$. Then, by
Lemma~\ref{lema:3.1}, $$ \tau_t = \frac{\mu_{n,kt}}{\mu_{n,1}}.
$$

Notice that $\tau_2 = g'(0)$ and $\mu_{n,k} = \tau_2\cdot
\mu_{n+1,k},$ whenever $\mu_{n,k}<a$.

Remark that, if $\mu_{n,k}\leq g(a)$, then
\begin{equation}\label{eq:3.2} \mu_{n,k} = \tau_k\cdot \mu_{n,1},
\end{equation} precisely, if $\mu_{n,k+1}\leq g(a)$, then $$ \# I_{n,k} =
\mu_{n,1}\cdot (\tau_{k+1} -\tau_k).
$$

There is $\widetilde{a}<a$ such that for any $n\geq 1$ and $k$
such that for any $\mu_{n,k}\leq \widetilde{a}$ the sequence of
intervals $$ (0, \mu_{n,k})\stackrel{g_r}{\longleftarrow}
 (\mu_{n+1,2^n-k}, 1) \stackrel{g_l}{\longleftarrow} (\mu_{n+2,2^n-k},
\mu_{2,1})\stackrel{g_l}{\longleftarrow} \ldots
\stackrel{g_l}{\longleftarrow} (\mu_{n+2+m,2^n-k}, \mu_{2+m,1})
$$
does not contain any kink of $g$, where $m\geq 1$ is minimal
natural number such that $\mu_{2+m,1}\leq a$.

By Lemma~\ref{lema:2.18}, $$ \# (\mu_{n+1,2^n-k}, 1) =
\frac{1}{g'(1)}\cdot\# (0, \mu_{n,k}),
$$$$
\#(\mu_{n+2,2^n-k}, \mu_{2,1}) =\frac{1}{g'(v-)}\cdot
\#(\mu_{n+1,2^n-k}, 1)
$$ and there is $p\in \mathbb{R}_+$, independent on $\mu_{n,k}$,
such that \begin{equation}\label{eq:3.3}\# (0, \mu_{n,k}) =p\cdot
\# (\mu_{n+2+m,2^n-k}, \mu_{2+m,1}).\end{equation}

Denote by $n^*$ a minimal natural number such that
$\mu_{n^*,1}\leq \widetilde{a}$. Now for any $n\geq n^*$ and
$k\geq 1$ such that $\mu_{n,k}\leq \widetilde{a}$, we can
rewrite~\eqref{eq:3.3} as
$$\tau_2^{2+m}\cdot \# (0, \mu_{n+2+m,k}) =p\cdot \#
(\mu_{n+2+m,2^n-k}, \mu_{n+2+m,2^n}),$$ or, by~\eqref{eq:3.2},
$$\tau_2^{2+m}\cdot \tau_k =p\cdot (\tau_{2^n} -\tau_{2^n-k}).$$
Denote $q = \frac{p}{t_2^{2+m}}$. Then
\begin{equation}\label{eq:3.4}\tau_k =q\cdot (\tau_{2^n}
-\tau_{2^n-k}).\end{equation}

Suppose that $\mu_{n,k+1}\leq \widetilde{a}$. Then, plug $k+1$
instead of $k$ into~\eqref{eq:3.4} and obtain
$$
\tau_{k+1} =q\cdot (\tau_{2^n} -\tau_{2^n-k-1}).
$$
After the subtraction of the obtained equality and~\eqref{eq:3.4},
write
\begin{equation}\label{eq:3.5}
\tau_{k+1} -\tau_k = q\cdot(\tau_{2^n-k} -\tau_{2^n-k-1}).
\end{equation}

Suppose that $n\geq n^*$ and $s\geq 1$ is such that
$\mu_{s,2^{n}+1}\leq \widetilde{a}$. Then rewrite~\eqref{eq:3.5}
as
\begin{equation}\label{eq:3.6}
\# I_{s,k} = q\cdot \# I_{s,2^n-k-1} \end{equation} for all $k,\,
0\leq k\leq 2^{n}$. Since we can change $k$ to $2^n -k -1$
in~\eqref{eq:3.6}, then \begin{equation}\label{eq:3.7}\#
I_{s,2^n-k-1} = q\cdot \# I_{s,k}
\end{equation} %
for all $k,\, 0\leq k\leq 2^{n}$. Equalities~\eqref{eq:3.6}
and~\eqref{eq:3.7} imply that $q=1$ in~\eqref{eq:3.6}, i.e.
\begin{equation}\label{eq:3.8} \# I_{s,k} = \# I_{s,2^n-k-1}
\end{equation} for all $k,\, 0\leq k\leq 2^n-1$, all $s\geq 1$ and
$n\geq n^*$ such that $I_{s,2^n}\subseteq (0, \widetilde{a})$.

For any $i,\, 0\leq i<s$ write $$ \overline{I_{s-i,k}}
=\bigcup\limits_{j=0}^{2^i-1} \overline{I_{s,k\cdot 2^i+j}}
$$ and by~\eqref{eq:3.8} get $$\#
I_{s-i,k} = \# I_{s-i,2^{n-i}-k-1}
$$ for all $k,\, 0\leq k\leq 2^{n-i}-1$. Thus, the condition
$n\geq n^*$ and $I_{s,2^n}\subseteq (0, \widetilde{a})$ is not
important in~\eqref{eq:3.8}, i.e.
\begin{equation}\label{eq:3.9} \# I_{n,k} = \# I_{n,k+1}
\end{equation}
for all $n\geq 1$ and $k$ such that $I_{n,k+1}\subseteq (0,
\widetilde{a})$. Now lemma follows from~\eqref{eq:3.9} by
induction on $n$.
\end{proof}

We are now ready to prove Theorem~\ref{th:3}.

\begin{proof}[Proof of Theorem~\ref{th:3}]
It follows from Lemma~\ref{lema:3.2} that the conjugacy $h$ has
tangent $w$ on $\left( 0, \frac{w}{g(a)}\right)$, where $a$ is the
first kink of $g$.

Now Theorem~\ref{th:3} follows from Lemma~\ref{lema:2.17}.
\end{proof}

\subsection{Firm carcass maps}

Let a firm carcass map $g$ be fixed till the end of this section
and $n_0$ denote the same as in Section~\ref{sec:2.3}. We will
prove Theorem~\ref{th:2} in this section.

We will need the following technical fact.

\begin{remark}\label{rem:3.3}
For any $n\geq n_0$ and $i,\, j,\, k\in \{0,\ldots, 2^{n_0}-1\}$
we have that $$ \# I_{n+3n_0,2^{2n_0}\cdot i +2^{n_0}\cdot j +k} =
\frac{l_i\cdot l_j\cdot
l_k}{\left(\sum\limits_{p=1}^{2^{n_0}-1}l_p\right)^2} \cdot \#
I_{n,0}.
$$
\end{remark}

\begin{proof}
By Remark~\ref{rem:2.9} write $$ \# I_{n+3n_0,2^{2n_0}\cdot i
+2^{n_0}\cdot j +k} = l_k\cdot \# I_{n+3n_0,2^{2n_0}\cdot i
+2^{n_0}\cdot j }.
$$

By Remark~\ref{rem:2.12} write $$ \# I_{n+3n_0,2^{2n_0}\cdot i
+2^{n_0}\cdot j } = \frac{\# I_{n+2n_0,2^{n_0}\cdot i +j
}}{\sum\limits_{k=1}^{2^{n_0}-1}l_k}.
$$

Ones more by Remark~\ref{rem:2.9} simplify $$ \#
I_{n+2n_0,2^{n_0}\cdot i +j } = l_j\cdot \# I_{n+2n_0,2^{n_0}\cdot
i}
$$ and by Remark~\ref{rem:2.12} obtain $$
\# I_{n+2n_0,2^{n_0}\cdot i} = \frac{\#
I_{n+n_0,i}}{\sum\limits_{k=1}^{2^{n_0}-1}l_k}.
$$

Now our remark follows from Remark~\ref{rem:2.9}.
\end{proof}

\begin{remark}\label{rem:3.4}
Notice that for any $n\geq n_0$ and $k,\, 0\leq k<2^{n-1}-1$ we
have that $$ \frac{\# I_{n+1,2k+1}}{\# I_{n+1,2k}} =
\frac{1}{\delta_{k}} -1.
$$
\end{remark}

\begin{proof}
Indeed, $$ \frac{\# I_{n+1,2k+1}}{\# I_{n+1,2k}} = \frac{\#
I_{n,k} -\# I_{n+1,2k}}{\# I_{n+1,2k}} = \frac{1}{\delta_{n,k}} -1
= \frac{1}{\delta_{k}} -1.
$$
\end{proof}

\begin{lemma}\label{lema:3.5}
Assume that there is $a\in (0, 1)$ and a number $t\in \mathbb{N}$,
which is not a natural power of $2$, the equality
\begin{equation}\label{eq:3.10} \xi(\mu_{n,k}) = \mu_{n,tk},
\end{equation} whenever $\mu_{n,tk}\leq a$, holds. Then
$$\delta_0 = \delta_1 =\ldots =\delta_{2^{n_0}-1}.$$
\end{lemma}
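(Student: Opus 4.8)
The plan is to exploit the semiconjugation relation \eqref{eq:3.10} together with the multiplicative structure of the interval lengths $\# I_{n,k}$ near $0$ described in Remarks~\ref{rem:2.9}, \ref{rem:2.11} and \ref{rem:3.4}. First I would note that, by Lemma~\ref{lema:2.15} (or by the hypothesis directly), the map $\xi$ sends consecutive preimage points $\mu_{n,k},\,\mu_{n,k+1}$ to $\mu_{n,tk},\,\mu_{n,t(k+1)}=\mu_{n,tk+t}$, as long as these stay below $a$. Consequently $\xi$ carries the interval $I_{n,k}$ onto the \emph{union} of the $t$ consecutive intervals $I_{n,tk},I_{n,tk+1},\dots,I_{n,tk+t-1}$, so
$$
\#\bigl(\xi(I_{n,k})\bigr) = \sum_{j=0}^{t-1}\# I_{n,tk+j}.
$$
Doing the same for $I_{n,k+1}$ and taking ratios, and then using that by Remark~\ref{rem:2.9} the quantity $\# I_{n,m}$ factors as $l_{m\bmod 2^{n_0}}$ times a common factor depending only on $\lfloor m/2^{n_0}\rfloor$, I would extract relations purely among the $l_j$'s (equivalently, among the $\delta_j$'s via $l_{2j}=\delta_j$, $l_{2j+1}=1-\delta_j$ once one normalizes, cf. Remark~\ref{rem:3.4}).

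The heart of the argument is a counting/comparison step. Because $t$ is \emph{not} a power of $2$, the ``window'' of $t$ consecutive indices $\{tk,\dots,tk+t-1\}$ is not aligned with the dyadic blocks of length $2^{n_0}$, and as $k$ ranges over a suitable arithmetic progression the residues $tk\bmod 2^{n_0}$ cycle through \emph{all} of $\{0,1,\dots,2^{n_0}-1\}$ (since $t$ has an odd factor, $\gcd$ considerations let us hit every residue class by varying $k$ and passing to deeper levels $n$). I would then compare the identity obtained from $\xi$ acting on $I_{n,k}$ at one level with the identity obtained by first refining $I_{n,k}$ into its $2^{n_0}$ sub‑subintervals $I_{n+n_0,2^{n_0}k+r}$ and applying $\xi$ there; the two computations of the same length, expressed through the $l_j$ via Remarks~\ref{rem:2.9} and \ref{rem:2.12}, must agree. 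Matching coefficients of the resulting linear (in the $l_j$) identities — using that we may shift $k$ by $1$ and subtract, exactly as in the proof of Lemma~\ref{lema:3.2}, to kill the common factors — forces, for every residue class, an equality of the form $l_{2j}=l_{2j'}$ and $l_{2j+1}=l_{2j'+1}$ across all $j,j'$, i.e. all $\delta_j$ are equal.

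I expect the main obstacle to be the bookkeeping of indices modulo $2^{n_0}$: one must be careful that, when $\xi$ maps $I_{n,k}$ to a union of $t$ intervals, the indices $tk,\dots,tk+t-1$ genuinely sweep a full set of residues and that enough of these unions stay inside $(0,a)$ so that \eqref{eq:3.10} applies to all points involved (this is where one takes $n$ large and $k$ small, shrinking the relevant intervals below $a$ exactly as $\widetilde a$ was chosen in Lemma~\ref{lema:3.2}). A secondary subtlety is handling the carry when $tk+t-1$ crosses a multiple of $2^{n_0}$, which splits a sum of $l_j$'s across two dyadic blocks with different common factors; resolving this is precisely where the hypothesis ``$t$ not a power of $2$'' is used, since for $t=2^s$ the window length is itself a power of two and all such sums telescope trivially, giving no information. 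Once the residue‑sweeping is set up, the remaining algebra is the same subtract‑consecutive‑indices trick already used for \eqref{eq:3.9}, and the conclusion $\delta_0=\delta_1=\dots=\delta_{2^{n_0}-1}$ follows.
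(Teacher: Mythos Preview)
Your proposal has the right overall shape --- exploit \eqref{eq:3.10} together with the block factorization of Remark~\ref{rem:2.9} --- but the step you single out as ``the heart of the argument'' produces no constraint. You propose to compute $\#\bigl(\xi(I_{n,k})\bigr)$ in two ways: directly as $\sum_{j=0}^{t-1}\#I_{n,tk+j}$, and then by first refining $I_{n,k}$ into its $2^{n_0}$ children $I_{n+n_0,\,2^{n_0}k+r}$ and applying $\xi$ to each. But both computations return the length of the \emph{same} interval $[\mu_{n,tk},\mu_{n,tk+t}]$; the second merely partitions it into $t\cdot 2^{n_0}$ pieces at level $n+n_0$ instead of $t$ pieces at level $n$, so the equality of the two sums is a tautology and says nothing about the $l_j$. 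The identities you do get are ratios of sums of the $l_j$, not linear combinations, so ``matching coefficients'' is not available either; and the shift-by-one subtraction borrowed from Lemma~\ref{lema:3.2} does not obviously cancel the block factors here the way it cancelled $\tau_{2^n}$ there.

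What the paper actually uses is one extra point: applying \eqref{eq:3.10} at level $n+1$ sends the midpoint $\mu_{n+1,2k+1}$ to $\mu_{n+1,t(2k+1)}$. Writing $t=s\cdot 2^m$ with $s$ odd, this fixes \emph{where inside} the image interval the midpoint lands and yields the genuine relation
\[
\frac{1}{\delta_k}-1 \;=\; \frac{\#I_{n+1,2k+1}}{\#I_{n+1,2k}} \;=\; \frac{\sum_{i=s}^{2s-1}\#I_{n-m+1,\,2sk+i}}{\sum_{i=0}^{s-1}\#I_{n-m+1,\,2sk+i}}\,.
\]
The left side depends only on $k\bmod 2^{n_0}$. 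The paper then picks $k=k^{*}$ so that a multiple of $2^{n_0}$ sits at the penultimate slot of the window $\{2sk,\dots,2sk+2s-1\}$, and replaces $k^{*}$ by $k^{*}+i\cdot s\cdot 2^{n_0}$: the left side is unchanged, while on the right the three-level factorization of Remark~\ref{rem:3.3} isolates a single varying factor $l_{is}/l_{is-1}$. Constancy in $i$ forces $l_{is}/l_{is-1}$ to be the same $q$ for all $i$; since $s$ is odd this gives $l_k/l_{k-1}=q$ for every $k$, and periodicity $l_{2^{n_0}}=l_0$ forces $q=1$. It is this pair of moves --- descend one level to get a ratio periodic in $k$, then shift $k$ along the progression $k^{*}+is\cdot 2^{n_0}$ rather than by $1$ --- that your sketch is missing.
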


\begin{proof} Suppose that $t = s\cdot 2^m$, where $s$ is odd. Thus,
by~\eqref{eq:3.10} and Remark~\ref{rem:2.3}, obtain that
\begin{equation}\label{eq:3.11} \xi(\mu_{n,k}) = \mu_{n-m,sk}.
\end{equation}

For any $k\geq 0$ such that $\mu_{n,tk}<a$ it follows
from~\eqref{eq:3.10} that
$$ \xi\circ [\mu_{n,k}, \mu_{n,k+1}] = [\mu_{n-m,sk},
\mu_{n-m,sk+s}],
$$ moreover, \begin{equation}\label{eq:3.12}
\xi(\mu_{n+1,2k+1}) = \mu_{n-m+1,s(2k+1)}. \end{equation} Thus, it
follows from~\eqref{eq:3.11} and~\eqref{eq:3.12} that
$$ \frac{\mu_{n,k+1}-\mu_{n+1,2k+1}}{\mu_{n+1,2k+1}-\mu_{n,k}} =
\frac{\mu_{n-m,sk+s}-\mu_{n-m+1,s(2k+1)}}{\mu_{n-m+1,s(2k+1)}
-\mu_{n-m,sk}}.
$$

By Notation~\ref{not:2.5}, we can rewrite the last equality as
\begin{equation}\label{eq:3.13}
\frac{\# I_{n+1,2k+1}}{\# I_{n+1,2k}} =
\frac{\sum\limits_{i=s}^{2s-1}\#
I_{n-m+1,2sk+i}}{\sum\limits_{i=0}^{s-1}\# I_{n-m+1,2sk+i}}.
\end{equation}

By Remark~\ref{rem:3.4} we can rewrite~\eqref{eq:3.13} as
\begin{equation}\label{eq:3.14}
\frac{1}{\delta_{k}} -1 = \frac{\sum\limits_{i=s}^{2s-1}\#
I_{n-m+1,2sk+i}}{\sum\limits_{i=0}^{s-1}\# I_{n-m+1,2sk+i}}.
\end{equation}

Notice that the right side of~\eqref{eq:3.14} contains $2s$
intervals, whose the second indices are the consequent numbers
\begin{equation}\label{eq:3.15} \{ 2sk, 2sk+1,\ldots,\, 2sk +2s
-1\}.
\end{equation}

For every $i\geq 0$ denote $k_i$ the number such that the
set~\eqref{eq:3.15} contains $i\cdot 2^{n_0}$. Next, denote $j_i,\
0\leq j_i\leq 2s-1$ such that $2sk_i +j_i = i\cdot 2^{n_0}$. Since
$s$ and $2^{\, n_0}$ do not have common divisors, then $$ \{ j_0,
j_1, \ldots, j_{s-1}\} = \{0, 2,\ldots, 2s-2\},
$$ moreover the set $j_i,\, i\geq 0$ is periodical with period
$s$.

Denote $k^*$ the minimal value $k = k_i\geq 0$ such that $j_i =
2s-2$. Then there is $w,\, 0< w\leq s$ such that plugging $k =
k^*$ into~\eqref{eq:3.14} transforms it to
\begin{equation}\label{eq:3.16}
\frac{1}{\delta_{k^*}} -1 = \frac{\sum\limits_{i=-1}^{s-2}\#
I_{n-m+1,2^{n_0}\cdot w -i}}{\sum\limits_{i=s-1}^{2s}\#
I_{n-m+1,2^{n_0}\cdot w -i}}
\end{equation}

Denote $k_i^{*} = k^* + i\cdot s\cdot 2^{\, n_0}$, where $i\geq
0$. Then the plug $k^*_i$ instead of $k^*$ into~\eqref{eq:3.16}
transforms it to \begin{equation}\label{eq:3.17}
\frac{1}{\delta_{k^* + i\cdot s\cdot 2^{\, n_0}}} -1 =
\frac{\sum\limits_{j=-1}^{s-2}\# I_{n-m+1,2^{n_0}\cdot (w+i\cdot
s\cdot 2^{\, n_0}) -j}}{\sum\limits_{j=s-1}^{2s}\#
I_{n-m+1,2^{n_0}\cdot (w+i\cdot s\cdot 2^{\, n_0}) -j}}.
\end{equation}

By Remarks~\ref{rem:2.11} and~\ref{rem:3.3} we can
rewrite~\eqref{eq:3.17} as
$$
\frac{1}{\delta_{k^*}} -1 = \frac{\frac{l_{i\cdot s}}{l_{i\cdot
s-1}}\cdot \sum\limits_{j=1}^{s-2} l_{i\cdot s}\cdot l_w\cdot
l_{2^{n_0-j}} +\sum\limits_{j=1}^{s-2} l_{i\cdot s}\cdot l_w\cdot
l_{2^{n_0-j}} }{\sum\limits_{j=s-1}^{2s}l_{i\cdot s}\cdot l_w\cdot
l_{2^{n_0-j}}},
$$ which can be cancelled to
$$
\frac{1}{\delta_{k^*}} -1 = \frac{\frac{l_{i\cdot s}}{l_{i\cdot
s-1}}\cdot \sum\limits_{j=1}^{s-2} l_{2^{n_0-j}}
+\sum\limits_{j=1}^{s-2} l_{2^{n_0-j}} }{\sum\limits_{j=s-1}^{2s}
l_{2^{n_0-j}}}
$$

Since the left hand side is independent on $i$, then so is  right
hand side. Thus, there is $q> 0$ such that $$\frac{l_{i\cdot
s}}{l_{i\cdot s-1}} =q$$ for all $i$. Since $s$ and $2^{n_0}$ are
pairwise prime, then for every $k,\, 0< k\leq 2^{n_{0}}$ we have
that $$ \frac{l_k}{l_{k-1}} =q,
$$ which means that $$l_0 =l_1 =\ldots = l_{2^{n_0}-1}.$$

Now Lemma follows from Remark~\ref{rem:3.4}.
\end{proof}

We are now ready to proof Theorem~\ref{th:2}.

\begin{proof}[Proof of Theorem~\ref{th:2}]
By Lemma~\ref{lema:3.5} obtain that $\delta_0 = \delta_1 =\ldots
=\delta_{2^{n_0}-1}$. Thus, by Lemma~\ref{lema:2.17}, the
conjugacy $h$, which satisfies~\eqref{eq:1.6}, is piecewise
linear.

Since any continuous solution $\psi$ of~\eqref{eq:1.4} can be
expressed by~\eqref{eq:2.1} from some continuous solution $\xi$
of~\eqref{eq:1.2}, then Theorem~\ref{th:2} follows from
Proposition~\ref{prop:1}, because $\xi$ is piecewise linear.
\end{proof}

\setlength{\unitlength}{1pt}

\pagestyle{empty}
\bibliography{Ds-Bib}{}
\bibliographystyle{makar}

\newpage
\tableofcontents

\end{document}